\newtheorem{theorem}{Theorem}[section]
\newtheorem{lemma}[theorem]{Lemma}
\newtheorem{corollary}[theorem]{Corollary}
\newtheorem{question}[theorem]{Question}
\newtheorem{conjecture}[theorem]{Conjecture}
\newtheorem{claim}[theorem]{Claim}
\theoremstyle{definition}
\newtheorem{definition}[theorem]{Definition}
\theoremstyle{remark}
\newtheorem{remark}[theorem]{Remark}
\newbox\gnBoxA
\newdimen\gnCornerHgt
\newdimen\gnArgHgt
\def\gnmb #1{%
\setbox\gnBoxA=\hbox{$#1$}%
\gnArgHgt=\ht\gnBoxA%
\ifnum     \gnArgHgt<\gnCornerHgt \gnArgHgt=0pt%
\else \advance \gnArgHgt by -\gnCornerHgt%
\fi \raise\gnArgHgt\hbox{$\ulcorner$} \box\gnBoxA %
\raise\gnArgHgt\hbox{$\urcorner$}}
\title{A note on the consistency operator}
\author{James Walsh}
\address{Group in Logic and the Methodology of Science, University of California, Berkeley}
\email{walsh@math.berkeley.edu}
\thanks{2010 \emph{Mathematics Subject Classification}. Primary 03F40.}
\thanks{Thanks both to Antonio Montalb{\'a}n and to an anonymous referee for their comments and suggestions.}
\begin{document}

\maketitle

\begin{abstract}
It is a well known empirical observation that natural axiomatic theories are pre-well-ordered by proof-theoretic strength. For any natural theory $T$, the next strongest natural theory is $T+\mathsf{Con}_T$. We formulate and prove a statement to the effect that the consistency operator is the weakest natural way to uniformly extend axiomatic theories.
\end{abstract}

\section{Introduction}

G{\"o}del's second incompleteness theorem states that no consistent sufficiently strong effectively axiomatized theory $T$ proves its own consistency statement $\mathsf{Con}_T$. Using ad hoc proof-theoretic techniques (namely, Rosser-style self-reference) one can construct $\Pi_1$ sentences $\varphi$ that are not provable in $T$ such that $T+\varphi$ is a strictly weaker theory than $T+\mathsf{Con}_T$. Nevertheless, $\mathsf{Con}_T$ seems to be the weakest \emph{natural} $\Pi_1$ sentence that is not provable in $T$. Without a mathematical definition of ``natural,'' however, it is difficult to formulate a precise conjecture that would explain this phenomenon. This is a special case of the well known empirical observation that natural axiomatic theories are pre-well-ordered by consistency strength, which S. Friedman, Rathjen, and Weiermann \cite{friedman2013slow} call one of the ``great mysteries in the foundations of mathematics.''


Recursion theorists have observed a similar phenomenon in Turing degree theory. One can use ad hoc recursion-theoretic methods like the priority method to construct non-recursive $\Sigma_1$ definable sets whose Turing degree is strictly below that of $0'$. Nevertheless, $0'$ seems to be the weakest \emph{natural} non-recursive r.e. degree. Once again, without a mathematical definition of ``natural,'' however, it is difficult to formulate a precise conjecture that would explain this phenomenon. 

A popular approach to studying natural Turing degrees is to focus on degree-invariant functions; a function $f$ on the reals is \emph{degree-invariant} if, for all reals $A$ and $B$, $A\equiv_T B$ implies $f(A) \equiv_T f(B)$. The definitions of natural Turing degrees tend to relativize to arbitrary degrees, yielding degree invariant functions on the reals; for instance, the construction of $0'$ relativizes to yield the Turing Jump. Sacks \cite{sacks1963degrees} asked whether there is a degree invariant solution to Post's Problem. Recall that a function $W:2^\omega\rightarrow 2^\omega$ is a \emph{recursively enumerable operator} if there is an $e\in\omega$ such that, for each $A$, $W(A)=W_e^A$, the $e^{th}$ set recursively enumerable in $A$.

\begin{question}[Sacks]
Is there a degree-invariant recursively enumerable operator $W$ such that for every real $A$, $A <_T W_e^A <_T A'$?
\end{question}
Though the question remains open, Slaman and Steel \cite{slaman1988definable} proved that there is no order-preserving solution to Post's Problem. Recall that a function $f$ on the reals is \emph{order-preserving} if, for all reals $A$ and $B$, $A\leq_T B$ implies $f(A) \leq_T f(B)$. 





In \cite{montalban2019inevitability}, Montalb{\'a}n and the author proved a proof-theoretic analogue of a negative answer to Sacks' question for order-preserving functions. Let $T$ be a sound, sufficiently strong effectively axiomatized theory in the language of arithmetic, e.g., $\mathsf{EA}$.\footnote{$\mathsf{EA}$ is a theory in the language of arithmetic (with exponentiation) axiomatized by the axioms of Robinson's $Q$, recursive axioms for exponentiation, and induction for bounded formulas.} A function $\mathfrak{g}$ is \emph{monotone} if, for all sentences $\varphi$ and $\psi$, $T\vdash\varphi \rightarrow \psi$ implies $T\vdash \mathfrak{g}(\varphi)\rightarrow \mathfrak{g}(\psi)$ (this is just to say that $\mathfrak{g}$ induces a monotone function on the Lindenbaum algebra of $T$). Let $[\varphi]$ denote the equivalence class of $\varphi$ modulo $T$ provable equivalence, i.e., $[\varphi] := \{ \psi : T\vdash \varphi \leftrightarrow \psi \}.$ One of the main theorems of \cite{montalban2019inevitability} is the following.


\begin{theorem}[Montalb{\'a}n--W.]\label{minimal}
Let $\mathfrak{g}$ be recursive and monotone such that:
\begin{itemize}
\item for all $\varphi$, $T + \mathsf{Con}_T(\varphi) \vdash \mathfrak{g}(\varphi)$
\item for all consistent $\varphi$, $T + \varphi \nvdash \mathfrak{g}(\varphi)$
\end{itemize}
Then for every true $\varphi$, there is a true $\psi$ such that $T + \psi \vdash \varphi$ and 
$$[\psi \wedge \mathfrak{g}(\psi)] = [\psi \wedge \mathsf{Con}_T(\psi)].$$
\end{theorem}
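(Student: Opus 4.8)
The plan is to build the desired $\psi$ by a diagonalization that "defeats" $\mathfrak{g}$'s attempt to be strictly weaker than the consistency operator on $[\psi]$. The target equality $[\psi \wedge \mathfrak{g}(\psi)] = [\psi \wedge \mathsf{Con}_T(\psi)]$ says that, modulo $T+\psi$, the sentence $\mathfrak{g}(\psi)$ is equivalent to $\mathsf{Con}_T(\psi)$. The second hypothesis on $\mathfrak{g}$ already gives $T+\psi \nvdash \mathfrak{g}(\psi)$, hence — if one can also arrange $T+\psi \vdash \mathfrak{g}(\psi) \to \mathsf{Con}_T(\psi)$ — the full force of Gödel/Löb would push $\mathfrak{g}(\psi)$ up to exactly $\mathsf{Con}_T(\psi)$ over $T+\psi$. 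So the two things to engineer are (i) $\psi$ true with $T+\psi \vdash \varphi$, and (ii) over $T+\psi$, $\mathfrak{g}(\psi)$ implies (and hence is equivalent to) $\mathsf{Con}_T(\psi)$.

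**First I would** set up a self-referential sentence. The natural move is to apply the fixed-point lemma to get a $\Pi_1$-ish sentence $\psi$ asserting roughly: "$\varphi$ holds, and for every proof of $\neg\,\mathfrak{g}(\psi)$ from $T+\psi$ there is a shorter proof of $\neg\psi$ from $T$" — a Rosser-style twist relative to $\mathfrak{g}$. Here one uses the first hypothesis $T+\mathsf{Con}_T(\varphi)\vdash\mathfrak{g}(\varphi)$ together with monotonicity of $\mathfrak{g}$: since $T+\psi$ will prove $\mathsf{Con}_T(\psi)$'s antecedent is unavailable to $T+\psi$ itself, $\mathfrak{g}(\psi)$ sits just above $\psi$. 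The recursiveness of $\mathfrak{g}$ is what makes "$\mathfrak{g}(\psi)$" a legitimate arithmetical expression once $\psi$ is fixed, so the fixed-point construction goes through; monotonicity lets us transport provable implications through $\mathfrak{g}$, which is needed to verify the formalized statements inside $T$.

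**The key steps, in order, are:** (1) Use the fixed-point lemma to define $\psi$ as a conjunction of $\varphi$ with a clause that is self-referential through $\mathfrak{g}(\psi)$ and a Rosser-style proof-comparison. (2) Check $T + \psi \vdash \varphi$: immediate from the first conjunct. (3) Check $\psi$ is true: argue in the standard model that the Rosser clause holds, using soundness of $T$ (via the hypothesis that $\varphi$ is true and the second hypothesis guaranteeing $T+\varphi$, hence $T+\psi$, does not refute $\mathfrak{g}(\psi)$, so no proof of $\neg\mathfrak{g}(\psi)$ exists and the clause is vacuously true). (4) Formalize: show $T+\psi \vdash \mathfrak{g}(\psi)\to\mathsf{Con}_T(\psi)$ — this is where the Rosser clause earns its keep, because inside $T+\psi$ a proof of $\neg\psi$ would contradict $\mathsf{Con}_T(\psi)$, and the clause lets us manufacture such a proof from any putative inconsistency. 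Combined with hypothesis two, $T+\psi\nvdash\mathfrak{g}(\psi)$, and a Löb-style argument (or direct manipulation), conclude $T+\psi \vdash \mathfrak{g}(\psi)\leftrightarrow\mathsf{Con}_T(\psi)$, i.e.\ the claimed identity of Lindenbaum classes.

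**The main obstacle I expect** is step (4): getting the formalized implication $T+\psi\vdash\mathfrak{g}(\psi)\to\mathsf{Con}_T(\psi)$ to come out right while simultaneously keeping $\psi$ true and keeping $T+\psi\vdash\varphi$. The tension is that the self-reference has to be threaded through the (only monotone, not necessarily provably-monotone-in-a-strong-sense) operator $\mathfrak{g}$, so one must be careful about what provable-$\Sigma_1$-completeness and formalized-monotonicity facts are actually available — in particular, monotonicity of $\mathfrak{g}$ is a fact about $T$-provability, and one needs its formalization, or a clever way to avoid needing it, for the derivation inside $T+\psi$ to go through. A secondary subtlety is ensuring the Rosser proof-comparison is set up with the right direction of the length inequality so that truth of $\psi$ and the internal derivation do not conflict; this typically requires the usual care with $\Sigma_1$ versus $\Pi_1$ bookkeeping in Rosser arguments.
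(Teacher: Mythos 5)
Your reduction of the problem is the right one: given the first hypothesis, the identity $[\psi \wedge \mathfrak{g}(\psi)] = [\psi \wedge \mathsf{Con}_T(\psi)]$ amounts exactly to $T + \psi + \mathfrak{g}(\psi) \vdash \mathsf{Con}_T(\psi)$, and it is also fine to refer to $\mathfrak{g}(\psi)$ inside a fixed point only under proof predicates (this avoids the truth-predicate problem; note that the present statement, unlike Theorem \ref{main}, puts no bound on the complexity of $\mathfrak{g}(\varphi)$, so the $\mathsf{True}_{\Pi_k}$ device used in this note is not available here). The genuine gap is your step (4): the Rosser clause you propose does not produce the implication. Argue inside $T+\psi$, assume $\mathfrak{g}(\psi)$ and $\neg\mathsf{Con}_T(\psi)$. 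Then $T+\psi$ is (provably) inconsistent and hence proves $\neg\mathfrak{g}(\psi)$ along with everything else, and the consequent of your clause --- a short $T$-proof of $\neg\psi$ --- is precisely what $\neg\mathsf{Con}_T(\psi)$ already supplies; there is no conflict between the assumed truth of $\mathfrak{g}(\psi)$ and the provability of $\neg\mathfrak{g}(\psi)$ in an inconsistent theory unless one invokes a reflection principle that $T+\psi$ does not have. Reversing the direction of the proof-length comparison runs into the same wall, so no contradiction, and hence no derivation of $\mathsf{Con}_T(\psi)$, is forthcoming.

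There is also a structural sign that the plan cannot be completed as described: you never use the second hypothesis in an essential way. In step (3) you cite it to conclude that $T+\psi$ does not \emph{refute} $\mathfrak{g}(\psi)$, but the hypothesis only says $T+\psi$ does not \emph{prove} $\mathfrak{g}(\psi)$, and non-refutation already follows from soundness together with the first hypothesis; in step (4) the equivalence needs only the implication plus hypothesis one. But the second hypothesis is indispensable: take $\mathfrak{g}(\varphi) = \top$ for every $\varphi$. This $\mathfrak{g}$ is recursive, monotone, and satisfies the first hypothesis, and every step of your outline goes through for it verbatim; yet step (4) would then give $T + \psi \vdash \mathsf{Con}_T(\psi)$ for a true, hence consistent, $\psi$, contradicting G\"odel's second incompleteness theorem. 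So any correct argument must thread hypothesis two into the construction of $\psi$ itself, and that is exactly the idea your sketch is missing. For comparison, this theorem is not proved in the present note at all --- it is imported from \cite{montalban2019inevitability}, where the argument is not a one-shot Rosser diagonalization but runs through a construction of towers of iterated consistency statements akin to the set $\mathfrak{A}$ of Section 3, with the second hypothesis doing real work there.
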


To state a corollary of this theorem, we recall that $\varphi$ \emph{strictly implies} $\psi$ if one of the following holds:
\begin{itemize}
\item[(i)] $T+\varphi\vdash\psi$ and $T+\psi\nvdash\varphi$.
\item[(ii)] $[\varphi]=[\psi]=[\bot]$.
\end{itemize}

\begin{corollary}
There is no recursive monotone $\mathfrak{g}$ such that for every $\varphi$,\\ $\big(\varphi\wedge \mathsf{Con}_T(\varphi)\big)$ strictly implies $\big(\varphi \wedge \mathfrak{g}(\varphi) \big)$ and $\big(\varphi \wedge \mathfrak{g}(\varphi) \big)$ strictly implies $\varphi$.
\end{corollary}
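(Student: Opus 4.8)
The plan is to derive the corollary from Theorem~\ref{minimal} by a proof by contradiction. Suppose toward a contradiction that $\mathfrak{g}$ is recursive and monotone and that for every $\varphi$ we have that $\big(\varphi\wedge\mathsf{Con}_T(\varphi)\big)$ strictly implies $\big(\varphi\wedge\mathfrak{g}(\varphi)\big)$, and that $\big(\varphi\wedge\mathfrak{g}(\varphi)\big)$ strictly implies $\varphi$. The first order of business is to check that such a $\mathfrak{g}$ satisfies the two bulleted hypotheses of Theorem~\ref{minimal}. From clause~(i) of the definition of strict implication applied to ``$\big(\varphi\wedge\mathsf{Con}_T(\varphi)\big)$ strictly implies $\big(\varphi\wedge\mathfrak{g}(\varphi)\big)$'' we get $T+\varphi\wedge\mathsf{Con}_T(\varphi)\vdash\varphi\wedge\mathfrak{g}(\varphi)$, hence in particular $T+\mathsf{Con}_T(\varphi)\vdash\mathfrak{g}(\varphi)$, which is the first bullet. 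Here one must be slightly careful about the case where $[\varphi\wedge\mathsf{Con}_T(\varphi)]=[\bot]$, i.e.\ clause~(ii) holds: in that case $[\varphi\wedge\mathfrak g(\varphi)]=[\bot]$ too, so $T+\mathsf{Con}_T(\varphi)\vdash\mathfrak g(\varphi)$ still holds trivially. For the second bullet, suppose $\varphi$ is consistent but $T+\varphi\vdash\mathfrak{g}(\varphi)$; then $[\varphi\wedge\mathfrak{g}(\varphi)]=[\varphi]$, so $T+\varphi\wedge\mathfrak{g}(\varphi)\vdash\varphi$ but also $T+\varphi\vdash\varphi\wedge\mathfrak g(\varphi)$, contradicting that $\big(\varphi\wedge\mathfrak{g}(\varphi)\big)$ strictly implies $\varphi$ (neither clause~(i) nor clause~(ii) can hold, the latter since $\varphi$ is consistent).

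Having verified the hypotheses, I would apply Theorem~\ref{minimal}. Take $\varphi$ to be any true sentence whose negation is not provable and which is genuinely weaker than its own consistency statement — for concreteness one can take $\varphi$ to be a true $\Pi_1$ sentence, e.g.\ even $\top$ works if one is careful, but it is cleanest to pick $\varphi$ so that $T+\varphi$ is consistent and $T+\varphi\nvdash\mathsf{Con}_T(\varphi)$, which holds for any true $\varphi$ by G\"odel's second incompleteness theorem applied to $T+\varphi$ (noting $T+\varphi$ is sound hence consistent). The theorem then yields a true $\psi$ with $T+\psi\vdash\varphi$ and $[\psi\wedge\mathfrak{g}(\psi)]=[\psi\wedge\mathsf{Con}_T(\psi)]$. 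This last equation is precisely what contradicts the hypothesis: applying the assumed strict implications with $\varphi:=\psi$, we have that $\big(\psi\wedge\mathsf{Con}_T(\psi)\big)$ strictly implies $\big(\psi\wedge\mathfrak{g}(\psi)\big)$; but these two sentences are $T$-provably equivalent, so $[\psi\wedge\mathsf{Con}_T(\psi)]=[\psi\wedge\mathfrak g(\psi)]$, and since $\psi$ is true, $T+\psi$ is consistent, so $[\psi\wedge\mathsf{Con}_T(\psi)]\neq[\bot]$, ruling out clause~(ii); and clause~(i) fails because $T$-provable equivalence makes the implication go both ways. This is the desired contradiction, so no such $\mathfrak{g}$ exists.

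The only genuine subtlety — and the step I would expect to require the most care — is the bookkeeping around the degenerate case $[\varphi]=[\bot]$ in the definition of strict implication, together with making sure that the sentence $\psi$ produced by Theorem~\ref{minimal} really is consistent with $T$ (so that clause~(ii) is unavailable when we derive the contradiction). Since $\psi$ is true and $T$ is sound, $T+\psi$ is consistent, so $[\psi\wedge\mathsf{Con}_T(\psi)]\neq[\bot]$; this is what makes the argument go through cleanly. Everything else is a direct unwinding of definitions, so the proof is short. I would write it as: assume such $\mathfrak g$ exists, verify the two bullets of Theorem~\ref{minimal} as above, invoke the theorem to get $\psi$, and observe that $[\psi\wedge\mathfrak g(\psi)]=[\psi\wedge\mathsf{Con}_T(\psi)]$ directly contradicts ``$\big(\psi\wedge\mathsf{Con}_T(\psi)\big)$ strictly implies $\big(\psi\wedge\mathfrak g(\psi)\big)$.''
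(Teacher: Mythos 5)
Your overall route is the intended one: the paper gives no separate argument for this corollary, which is meant to fall out of Theorem \ref{minimal} exactly as you describe --- verify the two bulleted hypotheses, invoke the theorem at any true $\varphi$, and observe that $[\psi\wedge\mathfrak{g}(\psi)]=[\psi\wedge\mathsf{Con}_T(\psi)]$ for a true $\psi$ clashes with clause (i) of ``$(\psi\wedge\mathsf{Con}_T(\psi))$ strictly implies $(\psi\wedge\mathfrak{g}(\psi))$,'' while soundness of $T$ rules out clause (ii). Your verification of the second bullet and your endgame are correct (and the extra conditions you impose when choosing $\varphi$ are unnecessary: any true sentence works).

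There is, however, a concrete misstep in your verification of the first bullet. From $T+\varphi\wedge\mathsf{Con}_T(\varphi)\vdash\varphi\wedge\mathfrak{g}(\varphi)$ you conclude ``in particular'' that $T+\mathsf{Con}_T(\varphi)\vdash\mathfrak{g}(\varphi)$; dropping $\varphi$ from the antecedent is not a valid inference, and the corollary's hypothesis only yields the relativized statement $T+\varphi+\mathsf{Con}_T(\varphi)\vdash\mathfrak{g}(\varphi)$. The same conflation appears in your degenerate case: if $[\varphi\wedge\mathsf{Con}_T(\varphi)]=[\bot]$, it is $T+\varphi+\mathsf{Con}_T(\varphi)$ that proves everything, not $T+\mathsf{Con}_T(\varphi)$ --- for instance with $\varphi:=\neg\mathsf{Con}_T$ one has $[\varphi\wedge\mathsf{Con}_T(\varphi)]=[\bot]$ while $T+\mathsf{Con}_T(\varphi)$ is a consistent (indeed true) theory, so nothing is ``trivial'' there. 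Thus what you can actually extract from the assumed strict implications is the first bullet with $\varphi$ added to the antecedent. As Theorem \ref{minimal} is quoted, with hypothesis $T+\mathsf{Con}_T(\varphi)\vdash\mathfrak{g}(\varphi)$, your verification does not go through as written: the corollary can only be obtained from the theorem if the theorem is applicable under the weaker hypothesis $T+\varphi+\mathsf{Con}_T(\varphi)\vdash\mathfrak{g}(\varphi)$ (which is the form one should check against \cite{montalban2019inevitability}), and a correct write-up should say this explicitly rather than elide the difference with ``in particular.''
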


The Slaman--Steel theorem suggests a strengthening of these results. Recall that a \emph{cone} in the Turing degrees is any set of the form $\{ B : B\geq_T A \}$ where $A$ is a Turing degree. The following is a special case of a theorem due to Slaman and Steel.

\begin{theorem}[Slaman--Steel]
Let $f:2^\omega\rightarrow 2^\omega$ be Borel and order-preserving. Then one of the following holds:
\begin{enumerate}
\item $f(A) \leq_T A$ on a cone.
\item $A' \leq_T f(A)$ on a cone.
\end{enumerate}
\end{theorem}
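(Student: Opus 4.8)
The plan is to combine Martin's cone theorem with a determinacy-based uniformization lemma and then a forcing construction that builds an explicit witness.

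First I would use Martin's Borel Turing-determinacy to reduce the dichotomy to a single implication. Since $f$ is order-preserving it is degree-invariant, so the sets $\{A : f(A)\le_T A\}$ and $\{A : A'\le_T f(A)\}$ are closed under Turing equivalence; both are Borel, since $f$ is Borel and the predicate ``$Y=\Phi_e^X$'' is arithmetical. Hence each of these sets either contains or is disjoint from a cone, and a finite intersection of cones is a cone. So it suffices to prove that if $f(A)\not\le_T A$ on a cone then $A'\le_T f(A)$ on a cone. Assume $f(A)\not\le_T A$ for all $A$ above a base $C_0$, and suppose toward a contradiction that $A'\le_T f(A)$ fails on a cone, so that $A'\not\le_T f(A)$ for all $A$ above some $C_1\ge_T C_0$.

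The next step, and the point where being Borel does work beyond Martin's theorem, is a uniformization lemma: on a cone $f$ is \emph{uniformly} order-preserving, that is, there are a base $D\ge_T C_1$ and a recursive function $p$ such that $A=\Phi_e^B$ implies $f(A)=\Phi_{p(e)}^{f(B)}$ whenever $A,B\ge_T D$. To obtain it I would, for each index $e$, set up a Turing game whose Borel payoff set encodes the failure of uniformity for that $e$; the game is determined, a win for the player forging a counterexample would contradict the degree-invariance of $f$, and a win for the opponent yields, by Martin's theorem, a cone on which a recursive $p$ succeeds (combining over $e$ by pairing). Replacing the base by $D$, we may assume $f$ is uniformly order-preserving and that $f(A)\not\le_T A$ and $A'\not\le_T f(A)$ hold for all $A\ge_T D$.

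It now suffices to construct a single real $A\ge_T D$ with $A'\le_T f(A)$, since this contradicts the previous paragraph. I would build $A$ by a forcing construction — conditions pairing a finite initial segment of $A$ with finitely many Turing functionals to be protected, in the style of the forcings used for degree-theoretic definability results (Kumabe--Slaman forcing) — while invoking the recursion theorem so that the construction may use an index for the real $A$ it produces. The aim is a single functional $\Psi$, built alongside $A$, with $A'=\Psi^{f(A)}$. The hypothesis $f(A)\not\le_T A$ is used as a source of genericity: at each stage it guarantees there is information in $f(A)$ not recoverable from $A$, which the construction can commandeer, while uniform order-preservation is what lets information arranged around $A$ be located inside $f(A)$ from a computable index; a priority argument balances the requirements coding $A'$ against the requirements protecting what has already been coded. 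Carrying this out contradicts $A'\not\le_T f(A)$ on a cone and completes the proof.

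The step I expect to be the main obstacle is this last construction. One has to interleave the recursion-theoretic fixed point, infinitely many requirements coding $A'$ into $f(A)$, and the preservation requirements that keep the coded parts of $f(A)$ intact, and to drive the whole thing by a purely negative hypothesis with no reduction ever in hand. Getting the forcing to meet all of these demands at once is the technical heart of the Slaman--Steel argument.
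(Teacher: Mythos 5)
This statement is not proved in the paper at all: it is quoted as a known theorem of Slaman and Steel \cite{slaman1988definable}, so there is no argument of the paper's to compare yours against. Judged on its own terms, your proposal has a genuine gap. The first reduction is fine: since $f$ is order-preserving it is degree-invariant, the sets $\{A: f(A)\le_T A\}$ and $\{A: A'\le_T f(A)\}$ are Borel and closed under $\equiv_T$, so Martin's cone theorem reduces the dichotomy to refuting the hypothesis that both $f(A)\not\le_T A$ and $A'\not\le_T f(A)$ hold on a cone. But everything after that is asserted rather than proved. The uniformization lemma is stated with no payoff set, no description of the game, and no verification that a winning strategy for either player yields what you claim; ``a win for the player forging a counterexample would contradict degree-invariance'' is not an argument, and uniformity-on-a-cone for order-preserving Borel functions is itself a nontrivial lemma of exactly the kind that needs a careful game construction.

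The larger problem is the final step, which you yourself identify as ``the technical heart'' and then do not carry out. Producing a single $A\ge_T D$ with $A'\le_T f(A)$ is precisely where the difficulty of the theorem lives, and the sketch gives no reason to believe the proposed forcing works: $f$ is merely Borel, so it is neither continuous nor computable, and a stage-by-stage construction cannot query $f(A)$ (or $f$ of any extension of a condition) while $A$ is still being built; the phrase ``the hypothesis $f(A)\not\le_T A$ is used as a source of genericity'' does not explain how the coding requirements $A'=\Psi^{f(A)}$ are ever satisfied or preserved; and the appeal to the recursion theorem is unclear, since the recursion theorem yields fixed points for computable constructions, whereas ``an index for the real $A$'' produced by a forcing argument relative to $D$ and a Borel $f$ has no evident meaning here. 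A proof outline that defers exactly the step where the theorem's content is concentrated is not a proof; to complete it you would need either the actual Slaman--Steel game argument or a worked-out construction replacing it, neither of which is present.
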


Montalb{\'a}n and the author asked whether Theorem \ref{minimal} could be strengthened in the style of the Slaman--Steel theorem, i.e., by showing that all increasing monotone recursive functions that are no stronger than the consistency operator are equivalent to the consistency operator in the limit. In this note we provide a positive answer to this question. 

To sharpen the notion of the ``limit behavior'' of a function, we introduce the notion of a true cone. A \emph{cone} is any set $\mathfrak{C}$ of the form $\{ \psi: T+\psi\vdash\varphi\}$ where $\varphi$ is a sentence. A \emph{true cone} is a cone that contains a true sentence. In \textsection{2} we prove that all recursive monotone operators that produce sentences of some bounded arithmetical complexity are bounded from below by the consistency operator on a true cone.

\begin{theorem}\label{main first}
Let $\mathfrak{g}$ be recursive and monotone such that, for some $k\in \mathbb{N}$, for all $\varphi$, $\mathfrak{g}(\varphi)$ is $\Pi_k$. Then one of the following holds:
\begin{enumerate}
\item There is a true cone $\mathfrak{C}$ such that for all $\varphi \in \mathfrak{C}$, 
$$T+ \varphi \vdash \mathfrak{g}(\varphi) .$$
\item There is a true cone $\mathfrak{C}$ such that for all $\varphi \in \mathfrak{C}$, 
$$T+ \varphi + \mathfrak{g}(\varphi) \vdash \mathsf{Con}_T(\varphi).$$
\end{enumerate} 
\end{theorem}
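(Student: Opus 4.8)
The plan is to reduce Theorem~\ref{main first} to Theorem~\ref{minimal} by a case analysis on the behavior of $\mathfrak{g}$ relative to the consistency operator, using the bounded complexity hypothesis to run a self-referential (fixed-point) argument. First I would dispose of the easy case: suppose that for every true sentence $\theta$ there is a true $\varphi$ with $T + \varphi \vdash \theta$ (i.e.\ $\varphi$ lies in the cone above $\theta$) such that $T + \varphi + \mathfrak{g}(\varphi) \vdash \mathsf{Con}_T(\varphi)$. A compactness/bootstrapping argument should then let me find a single true $\psi$ generating a true cone on which alternative (2) holds — the point being that once $T+\varphi+\mathfrak g(\varphi)\vdash\mathsf{Con}_T(\varphi)$ and we pass further up the cone, monotonicity of $\mathfrak g$ and the fact that $\mathsf{Con}_T$ is itself (provably) monotone in the right direction propagate the implication. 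So the real content is the complementary case.

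So assume alternative (2) fails everywhere in a suitable sense: there is a true $\varphi_0$ such that for no true $\varphi$ in the cone above $\varphi_0$ do we have $T+\varphi+\mathfrak g(\varphi)\vdash \mathsf{Con}_T(\varphi)$. I want to conclude alternative (1): on a true cone, $T+\varphi\vdash\mathfrak g(\varphi)$. The idea is to build, by the recursion theorem, a $\Pi_1$ sentence $\psi$ (below $\varphi_0$, true) that ``diagonalizes'' — $\psi$ asserts its own consistency-type behavior in such a way that if $T+\psi\nvdash\mathfrak g(\psi)$ then $T+\psi+\mathfrak g(\psi)$ would prove $\mathsf{Con}_T(\psi)$, contradicting our case hypothesis; here the hypothesis that $\mathfrak g(\psi)$ is $\Pi_k$ for a fixed $k$ is what makes the relevant satisfaction predicate available and keeps the fixed-point construction inside a fixed fragment. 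Running this uniformly over the cone, together with monotonicity, should give a true cone on which $T+\varphi\vdash\mathfrak g(\varphi)$.

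The cleanest route, though, may be to \emph{invoke Theorem~\ref{minimal} directly}. Suppose toward a contradiction that neither alternative holds. Then $\mathfrak g$ is not eventually below the identity on any true cone and not eventually at least $\mathsf{Con}_T$ on any true cone. I would like to manufacture from $\mathfrak g$ a recursive monotone $\mathfrak g'$ — by relativizing/padding $\mathfrak g$ to the cone above a witnessing true sentence and capping it, e.g.\ $\mathfrak g'(\varphi) := \mathfrak g(\varphi \wedge \varphi_0) \vee \neg\varphi_0$ or a similar modification — that provably satisfies the two bullet hypotheses of Theorem~\ref{minimal}: namely $T+\mathsf{Con}_T(\varphi)\vdash\mathfrak g'(\varphi)$ for all $\varphi$ (using that (2) fails, so $\mathfrak g$ never climbs above $\mathsf{Con}_T$, hence can be dominated appropriately — wait, this is the wrong direction) — more precisely I'd arrange the hypotheses so that the failure of (1) gives ``$T+\varphi\nvdash\mathfrak g'(\varphi)$ for consistent $\varphi$'' and the failure of (2) gives ``$T+\mathsf{Con}_T(\varphi)\vdash\mathfrak g'(\varphi)$,'' and then Theorem~\ref{minimal} yields, for every true $\varphi$, a true $\psi$ with $[\psi\wedge\mathfrak g'(\psi)]=[\psi\wedge\mathsf{Con}_T(\psi)]$; unpacking this equality at $\psi$ contradicts the failure of alternative (2) (it exhibits a true sentence in every true cone with $T+\psi+\mathfrak g(\psi)\vdash\mathsf{Con}_T(\psi)$, which upgrades to a true cone by the bootstrapping of the first paragraph).

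The main obstacle, I expect, is the bookkeeping in the complementary case — verifying that the ``failure of alternative~(2) at cofinally many true sentences'' actually upgrades to ``failure on a true cone,'' and dually that ``$T+\varphi\vdash\mathfrak g(\varphi)$ for cofinally many true $\varphi$ below $\varphi_0$'' upgrades to a true cone. This is where monotonicity of $\mathfrak g$ and the provable monotonicity of $\varphi\mapsto\mathsf{Con}_T(\varphi)$ must be combined carefully, and where the fixed complexity bound $\Pi_k$ is used to ensure the relevant provability predicates and partial truth definitions are available so that the constructions of Theorem~\ref{minimal} go through when relativized to the cone. The construction of the modified operator $\mathfrak g'$ satisfying exactly the bullet hypotheses of Theorem~\ref{minimal}, with all the implications provable in $T$, is the delicate point; everything else is a routine application of Gödel's fixed-point lemma and formalized $\Sigma_1$-completeness.
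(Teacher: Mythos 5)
Your plan has a genuine gap, and it sits at the exact point the whole theorem turns on: the upgrade from ``cofinally many true sentences'' to ``a true cone.'' In your first paragraph you claim that once $T+\varphi+\mathfrak g(\varphi)\vdash\mathsf{Con}_T(\varphi)$ holds at one $\varphi$, monotonicity propagates it up the cone above $\varphi$. It does not: if $T+\chi\vdash\varphi$, then monotonicity of $\mathfrak g$ gives $T+\chi+\mathfrak g(\chi)\vdash\mathsf{Con}_T(\varphi)$, but $\mathsf{Con}_T$ is \emph{antitone} in the relevant sense ($\mathsf{Con}_T(\chi)$ implies $\mathsf{Con}_T(\varphi)$, not conversely), so you get the consistency of the \emph{weaker} theory and no information about $\mathsf{Con}_T(\chi)$. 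With that bootstrapping gone, both your ``easy case'' and the final step of your third paragraph (upgrading the conclusion of Theorem~\ref{minimal} to a cone) collapse. The attempted reduction to Theorem~\ref{minimal} also mismatches quantifiers: the failure of alternatives (1) and (2) on every true cone only gives you \emph{cofinal} counterexamples, whereas the bullets of Theorem~\ref{minimal} require universal hypotheses (``for all $\varphi$, $T+\mathsf{Con}_T(\varphi)\vdash\mathfrak g'(\varphi)$'' and ``for all consistent $\varphi$, $T+\varphi\nvdash\mathfrak g'(\varphi)$''); your proposed capping $\mathfrak g'(\varphi):=\mathfrak g(\varphi\wedge\varphi_0)\vee\neg\varphi_0$ is never shown to bridge this, and the conclusion of Theorem~\ref{minimal} is in any case again only cofinal, not cone-shaped. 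The diagonalization sketch in your second paragraph is too underspecified to repair this, and note the theorem makes no increasingness assumption on $\mathfrak g$ (it must handle, e.g., constant operators), so any argument premised on $\mathfrak g$ climbing toward $\mathsf{Con}_T$ is off target.

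The paper's proof is much more direct and does not pass through Theorem~\ref{minimal} at all. One writes down the single arithmetical sentence
$$A:\quad \forall x\,\forall y\,\Big(\big(\mathcal G(x,y)\wedge\mathsf{True}_{\Pi_k}(y)\big)\rightarrow\mathsf{Con}_T(x)\Big),$$
where $\mathcal G$ is a $\Sigma_1$ definition of the graph of $\mathfrak g$ (this is where recursiveness and the fixed bound $\Pi_k$ are used). If $A$ is true, the cone generated by $A$ witnesses alternative (2): for $\varphi$ in that cone, $T+\varphi$ proves $A$, proves $\mathcal G(\varphi,\psi)$ for $\psi=\mathfrak g(\varphi)$ by provable $\Sigma_1$-completeness, and hence $T+\varphi+\mathfrak g(\varphi)\vdash\mathsf{Con}_T(\varphi)$. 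If $A$ is false, then some inconsistent $\varphi$ has $\mathfrak g(\varphi)$ true, so $\mathfrak g(\bot)$ is true, and the cone generated by $\mathfrak g(\bot)$ witnesses alternative (1) by monotonicity ($T\vdash\bot\rightarrow\varphi$ gives $T\vdash\mathfrak g(\bot)\rightarrow\mathfrak g(\varphi)$). The key idea you are missing is to make the uniform implication itself a sentence that can be placed inside the theories on the cone, so that each $T+\varphi$ verifies the needed implication internally, rather than trying to transfer pointwise facts up the cone externally.
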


In \textsection{3} we prove that the condition that $\mathfrak{g}$ is recursive cannot be weakened. More precisely, we exhibit a monotone $0'$ recursive function which vacillates between behaving like the identity operator and behaving like the consistency operator.
 
 \begin{theorem}\label{main second}
There is a $0'$ recursive monotone function $\mathfrak{g}$ such that, for every $\varphi$, $\mathfrak{g}(\varphi)$ is $\Pi_1$, yet for arbitrarily strong true sentences 
$$[\varphi \wedge \mathfrak{g}(\varphi)]=[\varphi\wedge\mathsf{Con}_T(\varphi)]$$
and for arbitrarily strong true sentences
$$[\varphi \wedge \mathfrak{g}(\varphi)]=[\varphi].$$
\end{theorem}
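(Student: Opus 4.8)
The plan is to produce $\mathfrak{g}$ as a pointwise limit $\mathfrak{g}(\varphi) = \lim_s \mathfrak{g}_s(\varphi)$ of a recursive sequence of recursive, monotone, $\Pi_1$-valued functions $\mathfrak{g}_s$ in which, for each fixed $\varphi$, the value changes only finitely often; by the limit lemma $\mathfrak{g}$ is then $0'$-recursive, and monotonicity and the $\Pi_1$ bound pass to the limit. Fix a recursive enumeration of the sentences. To each sentence $\theta$ attach its iterated-consistency ladder $\theta^{(0)} := \theta$, $\theta^{(n+1)} := \theta^{(n)} \wedge \mathsf{Con}_T(\theta^{(n)})$: each rung implies its predecessor over $T$, strictly so when $\theta$ is true by G{\"o}del's second incompleteness theorem applied to the consistent theory $T+\theta^{(n)}$; and whenever $\theta$ is true every rung is true and every rung implies $\theta$ over $T$. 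The requirements are, for each $\theta$: $(a_\theta)$ if $\theta$ is true there is a true $\varphi$ with $T+\varphi \vdash \theta$ and $[\varphi \wedge \mathfrak{g}(\varphi)] = [\varphi \wedge \mathsf{Con}_T(\varphi)]$; and $(b_\theta)$ if $\theta$ is true there is a true $\varphi$ with $T+\varphi \vdash \theta$ and $T + \varphi \vdash \mathfrak{g}(\varphi)$. Witnesses will always be chosen among the rungs $\theta^{(n)}$.

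Along a single ladder the intended behaviour is easy to specify and is already monotone: declare $\mathfrak{g}(\theta^{(n)}) := \mathsf{Con}_T(\theta^{(n)})$ for odd $n$ --- this gives $(a_\theta)$ outright, since the value \emph{is} $\mathsf{Con}_T$ of the argument --- and $\mathfrak{g}(\theta^{(n)}) := \mathsf{Con}_T(\theta^{(n-1)})$ for even $n \geq 2$, which is provable in $T + \theta^{(n)}$ and so gives $(b_\theta)$; one checks $T \vdash \mathfrak{g}(\theta^{(n+1)}) \to \mathfrak{g}(\theta^{(n)})$ directly, using $\theta^{(n+1)} \vdash \theta^{(n-1)}$. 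The real work is extending this assignment to all sentences compatibly. Here there is a genuine structural obstacle: if $\varphi$ is to be in ``identity mode'' ($T+\varphi \vdash \mathfrak{g}(\varphi)$) then for every weaker $\psi$ in ``consistency mode'' monotonicity forces $T+\varphi \vdash \mathsf{Con}_T(\psi)$; and a short up-set/down-set argument shows that if $\mathfrak{g}$ is only ever permitted the two extreme values $\top$ and $\mathsf{Con}_T(\varphi)$, then the identity-mode and consistency-mode sets cannot both contain arbitrarily strong true sentences. So $\mathfrak{g}$ must take genuinely intermediate, Rosser-style $\Pi_1$ values on the ``overlap'' sentences lying above a consistency-mode rung, and it is precisely the construction of these intermediate values --- obtained from formalized self-referential arguments whose outputs depend on provability facts not uniformly available --- that forces the use of the oracle. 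It \emph{must} be forced: by Theorem~\ref{main first} no recursive monotone $\Pi_1$-valued function can vacillate in this way.

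Concretely, at stage $s$ I would act for the least requirement not yet met by the current $\mathfrak{g}_s$ whose sentence $\theta$ still ``looks true'' (no $\Sigma_1$ witness to $\neg\theta$ has appeared --- a condition that settles in the limit): appoint the next unused rung $\theta^{(n)}$ as its witness, set the value of $\mathfrak{g}_s$ there to $\mathsf{Con}_T(\theta^{(n)})$ or $\mathsf{Con}_T(\theta^{(n-1)})$ as above, and install whatever intermediate values on finitely many stronger sentences are needed to keep $\mathfrak{g}_s$ monotone and $\Pi_1$-valued. When a later stage exposes some $\theta$ as false, or reveals a monotonicity conflict with a higher-priority commitment, the offending values are revised upward to the ``safe'' default $\mathsf{Con}_T(\varphi)$, which is always monotone-compatible from below, and the injured lower-priority requirements are restarted. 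One then argues that each requirement is injured only finitely often, so every $\mathfrak{g}_s(\varphi)$ stabilizes; that the limit is monotone and $\Pi_1$-valued; and that every true $\theta$ acquires witnesses of both kinds above it.

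The step I expect to be the main obstacle is exactly this termination/injury analysis: showing that repairing monotonicity conflicts by raising $\mathfrak{g}$ to $\mathsf{Con}_T(\varphi)$ on a sentence --- which may wreck the identity-mode behaviour some lower-priority requirement relied on --- propagates only to \emph{stronger} sentences and only finitely far, so that the approximations converge. Setting up the priority order and, above all, choosing the intermediate $\Pi_1$ values (the Rosser-style weakenings of $\mathsf{Con}_T(\varphi)$ sitting strictly between $\top$ and $\mathsf{Con}_T(\varphi)$ modulo $T+\varphi$) so that these repairs stay finite, and then verifying that the limiting values are the ones demanded by $(a_\theta)$ and $(b_\theta)$, is where the substance of the proof lies.
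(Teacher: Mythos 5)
Your proposal is an outline of a strategy, not a proof: you yourself flag that the ``termination/injury analysis'' and, above all, the construction of the intermediate Rosser-style $\Pi_1$ values are ``where the substance of the proof lies,'' and those are exactly the pieces that are missing. Moreover, at least one step as stated looks unworkable. You propose that each approximation $\mathfrak{g}_s$ be kept monotone by installing ``intermediate values on finitely many stronger sentences,'' with $\mathsf{Con}_T(\varphi)$ as a safe default elsewhere. But an identity-mode commitment at a rung $\theta^{(n)}$ (value $\mathsf{Con}_T(\theta^{(n-1)})$) constrains $\mathfrak{g}_s$ on \emph{every} sentence $\chi$ with $T\vdash\theta^{(n)}\rightarrow\chi$: monotonicity demands $T\vdash\mathsf{Con}_T(\theta^{(n-1)})\rightarrow\mathfrak{g}_s(\chi)$, and this fails for the default value $\mathsf{Con}_T(\chi)$ whenever $\chi$ lies strictly between $\theta^{(n-1)}$ and $\theta^{(n)}$ (e.g.\ $\chi=\theta^{(n-1)}\wedge\rho$ with $\rho$ a Rosser sentence for $T+\theta^{(n-1)}$). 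There are infinitely many such $\chi$, so a finite patch cannot restore monotonicity, and the same issue infects your claim that upward repairs ``propagate only finitely far.'' Until the values on these intermediate sentences are specified globally, and the convergence of the approximations and the satisfaction of the requirements $(a_\theta)$, $(b_\theta)$ in the limit are verified, there is no proof here.

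For contrast, the paper avoids all priority machinery. One builds a single recursive, tree-like set $\mathfrak{A}$ of sentences (at each stage the one true active sentence $\psi$ spawns $\psi\wedge\mathsf{Con}_T(\psi)\wedge\theta$ and $\psi\wedge\mathsf{Con}_T(\psi)\wedge\neg\theta$), and defines $\mathfrak{g}(\varphi)$ for consistent $\varphi$ to be $\bigwedge\{\mathsf{Con}_T(\zeta):\zeta\in\mathfrak{A},\ T+\varphi\vdash\zeta\}$. This value is a finite conjunction, so it is $\Pi_1$; monotonicity is automatic, since strengthening $\varphi$ only enlarges the set of $\zeta$'s; and $0'$ is needed only to decide consistency and the finitely many relevant provability questions --- no injuries, no limit lemma bookkeeping. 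Consistency-mode behaviour then holds at every $\varphi\in\mathfrak{A}$, and identity-mode behaviour holds at $\varphi=\psi\wedge\mathsf{Con}_T(\psi)$ for those $\psi\in\mathfrak{A}$ (shown to be arbitrarily strong) with $T\nvdash(\psi\wedge\mathsf{Con}_T(\psi))\rightarrow\theta_\psi$, because then $\varphi$ implies no element of $\mathfrak{A}$ beyond those already implied by $\psi$, so $T+\varphi\vdash\mathfrak{g}(\varphi)$. In effect, the role you assign to hand-crafted intermediate Rosser values is played by the sparseness of $\mathfrak{A}$ together with taking $\mathsf{Con}_T$ of the strongest $\mathfrak{A}$-elements below the input, which resolves in one stroke the global monotonicity problem your sketch leaves open.
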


Though Theorem \ref{main first} is a considerable strengthening of the result in \cite{montalban2019inevitability}, we conjecture that it admits of a dramatic improvement. We remind the reader that the aforementioned theorem of Slaman and Steel is a special case of a sweeping classification of increasing Borel order-preserving function. We say that a function $f$ is \emph{increasing} if, for all $A$, $A\leq_Tf(A)$.

\begin{theorem}[Slaman--Steel]
Let $f:2^\omega\rightarrow 2^\omega$ be increasing, Borel, order-preserving. Suppose that for some $\alpha<\omega_1$, $f(A) \leq_T A^{(\alpha)}$ for every $A$. Then for some $\beta\leq\alpha$, $f(A)\equiv_T A^{(\beta)}$ on a cone.
\end{theorem}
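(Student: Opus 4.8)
The plan is to deduce the theorem from the single-jump dichotomy stated just above, by iterating it transfinitely. First I would record the easy monotonicity facts. Because $f$ is order-preserving, for each $\gamma$ the set $\{A : f(A) \le_T A^{(\gamma)}\}$ (with $A^{(\gamma)}$ computed along a fixed path through Kleene's $\mathcal{O}$) is Turing-invariant: $A\equiv_T B$ gives $f(A)\equiv_T f(B)$ and $A^{(\gamma)}\equiv_T B^{(\gamma)}$; and since $f$ is Borel and $A\mapsto A^{(\gamma)}$ is Borel for fixed $\gamma<\omega_1$, this set is Borel. So by Borel Turing determinacy (Martin's cone theorem) either $f(A)\le_T A^{(\gamma)}$ on a cone or $f(A)\not\le_T A^{(\gamma)}$ on a cone, and likewise for $\ge_T$. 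Let $S:=\{\gamma\le\alpha : f(A)\le_T A^{(\gamma)}\text{ on a cone}\}$. It contains $\alpha$ by hypothesis and is upward closed in $[0,\alpha]$ since $A^{(\gamma)}\le_T A^{(\delta)}$ for $\gamma\le\delta$, so $\beta:=\min S$ exists and $\beta\le\alpha$. I would then show $f(A)\equiv_T A^{(\beta)}$ on a cone; the $\le_T$ direction is immediate from $\beta\in S$, so everything reduces to the $\ge_T$ direction.

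For the $\ge_T$ direction I would split into cases on $\beta$. If $\beta=0$, then $f(A)\le_T A$ on a cone, and since $f$ is increasing, $f(A)\equiv_T A=A^{(0)}$ there. If $\beta=\gamma+1$, then $\gamma\notin S$, i.e.\ it is not the case that $f(A)\le_T A^{(\gamma)}$ on a cone; invoking the relativized form of the stated dichotomy (discussed below) — with iterated jumps in place of the single jump — yields $f(A)\ge_T A^{(\gamma+1)}=A^{(\beta)}$ on a cone. If $\beta$ is a limit, then $\gamma\notin S$ for every $\gamma<\beta$, so the relativized dichotomy produces for each such $\gamma$ a cone $\mathfrak{C}_\gamma$ on which $f(A)\ge_T A^{(\gamma+1)}$; since a countable intersection of cones is again a cone, intersecting the $\mathfrak{C}_\gamma$ over all $\gamma$ below a fixed notation $b$ for $\beta$ and adjoining $b$ gives a cone on which $f(A)\ge_T A^{(\gamma+1)}$ for all $\gamma<_b\beta$ simultaneously, hence $f(A)\ge_T \bigoplus_{\gamma<_b\beta}A^{(\gamma+1)}\equiv_T A^{(\beta)}$. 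In every case $f(A)\ge_T A^{(\beta)}$ on a cone, which finishes the proof.

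The main obstacle, and the only substantial step, is justifying the relativized dichotomy used above: for each $\gamma<\omega_1$, if it is not the case that $f(A)\le_T A^{(\gamma)}$ on a cone, then $f(A)\ge_T A^{(\gamma+1)}$ on a cone. The stated single-jump dichotomy is proven by combining Martin's cone theorem with a Posner--Robinson style construction that, from the hypothesis $f(A)\not\le_T A$ on a cone, produces over an arbitrary real $Z$ a real $A\ge_T Z$ with $A'\le_T f(A)$, so that $\{A : A'\le_T f(A)\}$ is cofinal and hence — being Borel and Turing-invariant — contains a cone. To relativize this I would replace the single-step Posner--Robinson forcing by its iterated-jump analogue, building $A$ so that $A^{(\gamma)}$ is ``jump-transparent'' in the appropriate sense and the hypothesis $f(A)\not\le_T A^{(\gamma)}$ on a cone forces $A^{(\gamma+1)}\le_T f(A)$. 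Two points need care: the iterated-jump forcing must be set up uniformly in a notation for $\gamma$, so that it makes sense along the whole path through $\mathcal{O}$ naming the ordinals $\le\alpha$; and one must know that $A^{(\gamma)}$ is independent of the chosen notation on a cone, which follows from the relativized Spector uniqueness theorem. Everything else is bookkeeping with cones and the well-ordering of the ordinals.
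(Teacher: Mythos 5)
First, a caveat about the comparison itself: the paper does not prove this statement — it quotes it from Slaman and Steel — so there is no in-paper argument to match; your overall architecture (Martin's cone theorem to extract the least $\beta\leq\alpha$ with $f(A)\leq_T A^{(\beta)}$ on a cone, then a Posner--Robinson-style lower-bound argument) is indeed the standard route. But two of your steps have genuine gaps. The limit case is wrong as written: intersecting the cones $\mathfrak{C}_\gamma$ gives, for each $A$ in the intersection, that $f(A)$ computes each $A^{(\gamma+1)}$ \emph{separately}, but these reductions need not be uniform in $\gamma$, and non-uniform computation of all the $A^{(\gamma)}$ for $\gamma<\beta$ does not yield $\bigoplus_{\gamma<\beta}A^{(\gamma)}\equiv_T A^{(\beta)}$. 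Already for $\beta=\omega$ this fails: by the exact-pair theorem there are degrees above every $0^{(n)}$ that do not compute $0^{(\omega)}$. The correct treatment runs the join/forcing argument at limit $\beta$ directly: on a cone $f(A)\not\leq_T A^{(\gamma)}$ for every $\gamma<\beta$, and from that hypothesis one produces, cofinally, a $G$ with $G^{(\beta)}\leq_T f(G)\oplus G$, then finishes with increasingness and Martin's theorem, exactly as in your successor case.

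Second, the ``relativized dichotomy'' at successors is not a relativization of the quoted single-jump theorem in any formal sense: $A^{(\gamma)}$ is not the jump relative to an oracle fixed in advance of $A$, and you cannot push the problem down to a function of $A^{(\gamma)}$ (iterated Friedberg jump inversion makes $A\mapsto A^{(\gamma)}$ surjective onto the cone above $0^{(\gamma)}$, but $f(A)$ need not depend only on $A^{(\gamma)}$, so no induced function exists). What is actually required is the iterated-jump join theorem: if $X\not\leq_T B^{(\gamma)}$ (respectively, for all $\gamma<\beta$ at limits), then there is $G\geq_T B$ with $G^{(\gamma+1)}\leq_T X\oplus G$ (respectively $G^{(\beta)}\leq_T X\oplus G$). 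This is essentially the Shore--Slaman join theorem, proved by a nontrivial forcing (Kumabe--Slaman style); your one-sentence appeal to a ``jump-transparent'' iterated Posner--Robinson forcing is precisely the substantial mathematical content of the classification, not bookkeeping, so as it stands the core of the proof is asserted rather than proved. A smaller repair: for $\alpha\geq\omega_1^{\mathrm{CK}}$ there are no notations in Kleene's $\mathcal{O}$, so one should fix a real coding $\alpha$, work on the cone above it, and define $A^{(\gamma)}$ from that code (with Spector-style uniqueness ensuring independence of the code on a cone), rather than along a path through $\mathcal{O}$.
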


We conjecture that a similar classification of monotone proof-theoretic operators is possible. Our conjecture is stated in terms of iterated consistency statements. Let $\prec$ be a nice elementary presentation of a recursive well-ordering.\footnote{Nice elementary presentations of well-orderings are defined in \cite{beklemishev1995iterated}, see \textsection 2.3, Definition 1.} We define the iterates of the consistency operator by appealing to G{\"o}del's fixed point lemma.
$$T \vdash \mathsf{Con}^\alpha_T(\varphi) \leftrightarrow \forall \beta \prec \alpha \mathsf{Con}_T \big(\varphi\wedge\mathsf{Con}^\beta_T(\varphi) \big)$$
For true $\varphi$, the iterations of $\mathsf{Con}_T$ form a proper hierarchy of true sentences by G{\"o}del's second incompleteness theorem. We make the following conjecture.

\begin{conjecture}
Suppose $\mathfrak{g}$ is monotone, non-constant, and recursive such that, for every $\varphi$, $\mathfrak{g}(\varphi)\in\Pi_1$. Let $\prec$ be a nice elementary presentation of well-ordering and $\alpha$ an ordinal notation. Suppose that, for every $\varphi$, $$T+\varphi + \mathsf{Con}_T^\alpha(\varphi) \vdash \mathfrak{g}(\varphi).$$ Then for some $\beta\preceq\alpha$, for all $\varphi$ in a true cone, $$[\varphi\wedge \mathfrak{g}(\varphi)]=[\varphi\wedge \mathsf{Con}_T^\beta(\varphi)].$$
\end{conjecture}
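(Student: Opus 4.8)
The plan is to prove the conjecture by induction on the ordinal notation $\alpha$, with Theorem~\ref{main first} serving as the base case and supplying the key dichotomy at successor and limit stages. Throughout, fix the nice elementary presentation $\prec$ and recall that, because $\prec$ is nice, we have well-behaved provable reflection principles for the iterates: in particular $T$ proves $\mathsf{Con}^\alpha_T(\varphi) \to \mathsf{Con}^\beta_T(\varphi)$ for $\beta\prec\alpha$, and the map $\varphi\mapsto\mathsf{Con}^\alpha_T(\varphi)$ is itself monotone and recursive. The base case $\alpha = 0$ is exactly Theorem~\ref{main first}: if clause (1) there holds then $T+\varphi\vdash\mathfrak{g}(\varphi)$ on a true cone, and since $\mathsf{Con}^0_T(\varphi)$ is (provably in $T$) trivial, the nonconstancy of $\mathfrak{g}$ together with monotonicity forces $[\varphi\wedge\mathfrak g(\varphi)]=[\varphi]=[\varphi\wedge\mathsf{Con}^0_T(\varphi)]$ on a (possibly smaller) true cone; if clause (2) holds we are already in the situation handled by the first successor step below with $\beta = 0$.

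For the inductive step, suppose the conjecture holds for every notation $\gamma\prec\alpha$ and that $T+\varphi+\mathsf{Con}^\alpha_T(\varphi)\vdash\mathfrak g(\varphi)$ for all $\varphi$. The idea is to consider the auxiliary operator $\mathfrak h(\varphi) := \mathfrak g\big(\varphi\wedge\mathsf{Con}^{\gamma}_T(\varphi)\big)$ for appropriate $\gamma\prec\alpha$ (at successor stages, $\gamma$ the predecessor of $\alpha$; at limit stages, a carefully chosen cofinal approximation using the fixed-point definition of $\mathsf{Con}^\alpha_T$), which is again monotone, recursive, and $\Pi_1$-valued, and to which Theorem~\ref{main first} applies. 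Relativizing the base dichotomy to the true cone determined by $\mathsf{Con}^\gamma_T(\psi_0)$ for a suitable true $\psi_0$, one of two things happens: either $\mathfrak g$ is already absorbed below $\mathsf{Con}^\gamma_T$ on a true cone, in which case the inductive hypothesis applied to $\gamma$ yields some $\beta\preceq\gamma\prec\alpha$ with $[\varphi\wedge\mathfrak g(\varphi)]=[\varphi\wedge\mathsf{Con}^\beta_T(\varphi)]$ on a true cone and we are done; or else $\mathfrak g(\varphi)$ together with $\varphi$ proves $\mathsf{Con}_T\big(\varphi\wedge\mathsf{Con}^\gamma_T(\varphi)\big)$ on a true cone. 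In the latter case one must stitch these lower bounds together across all $\gamma\prec\alpha$: using monotonicity of $\mathfrak g$ and the Gödel fixed-point characterization $T\vdash\mathsf{Con}^\alpha_T(\varphi)\leftrightarrow\forall\gamma\prec\alpha\,\mathsf{Con}_T(\varphi\wedge\mathsf{Con}^\gamma_T(\varphi))$, a single true sentence $\varphi$ in the intersection cone will satisfy $T+\varphi+\mathfrak g(\varphi)\vdash\mathsf{Con}^\alpha_T(\varphi)$, and combined with the hypothesized upper bound this gives $[\varphi\wedge\mathfrak g(\varphi)]=[\varphi\wedge\mathsf{Con}^\alpha_T(\varphi)]$ on a true cone, i.e. $\beta=\alpha$.

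The two main obstacles I anticipate are the following. First, the ``stitching'' step at a \emph{limit} notation $\alpha$: Theorem~\ref{main first} gives, for each $\gamma\prec\alpha$, a true cone $\mathfrak C_\gamma$ on which the lower bound $\mathsf{Con}_T(\varphi\wedge\mathsf{Con}^\gamma_T(\varphi))$ holds, but these cones shrink as $\gamma$ grows and in general $\bigcap_{\gamma\prec\alpha}\mathfrak C_\gamma$ need not be a true cone --- it need not even be a cone. The fix should be a reflection/uniformity argument: because $\mathfrak g$ is recursive and the presentation $\prec$ is elementary, the statement ``for all $\gamma\prec\alpha$, $T+\varphi+\mathfrak g(\varphi)\vdash\mathsf{Con}_T(\varphi\wedge\mathsf{Con}^\gamma_T(\varphi))$'' is itself arithmetically definable and, one hopes, provable in $T+\varphi$ for $\varphi$ in a \emph{single} true cone, by internalizing Theorem~\ref{main first}'s construction. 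Making this internalization precise --- essentially showing the dichotomy of Theorem~\ref{main first} holds in a form provable in $T$ relative to the relevant parameters --- is the technical heart of the argument and the step most likely to require genuinely new work beyond what the excerpt provides. Second, one must verify that the dichotomy does not ``oscillate'' with $\gamma$: a priori, clause (1) of the relativized Theorem~\ref{main first} could hold for some $\gamma\prec\alpha$ and clause (2) for others, and one needs a monotonicity-in-$\gamma$ observation --- if $\mathfrak g$ is absorbed below $\mathsf{Con}^{\gamma}_T$ on a true cone then it is absorbed below $\mathsf{Con}^{\gamma'}_T$ for all $\gamma'\succeq\gamma$ on a true cone --- to conclude that there is a well-defined least $\beta\preceq\alpha$ at which absorption occurs, which is the $\beta$ witnessing the conclusion. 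I expect this second point to follow routinely from monotonicity of $\mathfrak g$ and of $\varphi\mapsto\mathsf{Con}^\gamma_T(\varphi)$ in the superscript, but it should be stated explicitly as a lemma before the induction.
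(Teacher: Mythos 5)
There is a genuine gap here, and it begins with the status of the statement itself: in the paper this is an open \emph{conjecture}. The paper proves only the single-step dichotomy (Theorem \ref{main first}, restated as Theorem \ref{main}) and the $0'$-recursive counterexample of \textsection 3; it offers no proof of the conjecture, so there is nothing to compare your argument against, and your proposal does not itself close the gap: it is a plan whose decisive steps are deferred, and those deferred steps are precisely where the content of the conjecture lies.

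Concretely: (i) the transfer between your auxiliary operator $\mathfrak{h}(\varphi):=\mathfrak{g}\big(\varphi\wedge\mathsf{Con}^{\gamma}_T(\varphi)\big)$ and $\mathfrak{g}$ runs the wrong way. Monotonicity gives $T\vdash \mathfrak{h}(\varphi)\rightarrow\mathfrak{g}(\varphi)$ (the input $\varphi\wedge\mathsf{Con}^{\gamma}_T(\varphi)$ is stronger, hence so is the output), so clause (2) of Theorem \ref{main} applied to $\mathfrak{h}$, namely $T+\varphi+\mathfrak{h}(\varphi)\vdash\mathsf{Con}_T(\varphi)$ on a true cone, cannot be converted into the lower bound you actually need for the stitching, $T+\varphi+\mathfrak{g}(\varphi)\vdash\mathsf{Con}_T\big(\varphi\wedge\mathsf{Con}^{\gamma}_T(\varphi)\big)$; no argument is given for that conversion. (ii) The inductive hypothesis at $\gamma$ presupposes the \emph{global} bound ``for every $\varphi$, $T+\varphi+\mathsf{Con}^{\gamma}_T(\varphi)\vdash\mathfrak{g}(\varphi)$,'' which follows neither from the hypothesis at $\alpha$ (since $\mathsf{Con}^{\alpha}_T$ is stronger than $\mathsf{Con}^{\gamma}_T$) nor from ``absorption on a true cone''; to use the induction as you describe you would need cone-relativized versions of both the conjecture and Theorem \ref{main}, which you neither formulate nor prove. (iii) The limit-stage stitching: you correctly note that the cones $\mathfrak{C}_\gamma$ may shrink and that their intersection need not be a true cone, and you propose to repair this by ``internalizing'' Theorem \ref{main} in $T$; but the case split in the proof of Theorem \ref{main} is on whether the sentence $A$ is true in $\mathbb{N}$, a question $T$ cannot settle, so no $T$-provable, uniform-in-$\gamma$ form of the dichotomy is available without substantially new ideas. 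This uniformization is not a routine refinement --- it is essentially the open problem, as you in effect concede when you say the step ``is most likely to require genuinely new work.'' (Two minor points: $\mathsf{Con}^0_T(\varphi)$ is provably true, so the $\alpha=0$ case is trivial and needs neither non-constancy nor Theorem \ref{main}; the first case in which Theorem \ref{main} plus the hypothesis yields the conclusion is $\alpha=1$.) As it stands, then, your text is a reasonable strategy sketch, but not a proof.
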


According to the conjecture, if an increasing monotone recursive function $\mathfrak{g}$ that produces only $\Pi_1$ sentences is no stronger than $\mathsf{Con}_T^\alpha$, it is equivalent on a true cone to $\mathsf{Con}_T^\beta$ for some $\beta\preceq\alpha$. This would provide a classification of a large class of monotone proof-theoretic operators in terms of their limit behavior.


\section{The main theorem}

Let $T$ be a sound, recursively axiomatized extension of $\mathsf{EA}$ in the language of arithmetic. We want to show that $T+\mathsf{Con}_T(\varphi)$ is the weakest natural theory that results from adjoining a $\Pi_1$ sentence to $T$. A central notion in our approach is that of a monotone operator on finite extensions of $T$.

\begin{definition}
$\mathfrak{g}$ is \emph{monotone} if, for every $\varphi$ and $\psi$, $$T\vdash\varphi \rightarrow \psi \textrm{ implies }T\vdash \mathfrak{g}(\varphi)\rightarrow \mathfrak{g}(\psi).$$
\end{definition}

\begin{remark}
We will switch quite frequently using the notation $T+\varphi \vdash \psi$ and $T\vdash \varphi \rightarrow \psi$, trusting that no confusion arises. The two claims are equivalent, by the Deduction Theorem.
\end{remark}

Our goal is to prove that the consistency operator is, roughly, the weakest operator for uniformly strengthening theories. Our strategy is to show that any uniform method for extending theories that is as weak as the consistency operator must be equivalent to the consistency operator in the limit. We sharpen the notion ``in the limit'' with the following definitions.

\begin{definition}
Given a sentence $\varphi$, the \emph{cone generated by $\varphi$} is the set of all sentences $\psi$ such that $T\vdash \psi \rightarrow \varphi$. A \emph{cone} is any set $\mathfrak{C}$ such that, for some $\varphi$, $\mathfrak{C}$ is the cone generated by $\varphi$. A \emph{true cone} is a cone that is generated by a sentence that is true in the standard model $\mathbb{N}$.
\end{definition}

We are now ready to state and prove the main theorem. Note that the following is a restatement of Theorem \ref{main first}.

\begin{theorem}\label{main} Let $T$ be a sound, effectively axiomatized extension of $\mathsf{EA}$. Let $\mathfrak{g}$ be recursive and monotone such that, for some $k\in\mathbb{N}$, for all $\varphi$, $\mathfrak{g}(\varphi)$ is $\Pi_k$. Then one of the following holds:
\begin{enumerate}
\item There is a true cone $\mathfrak{C}$ such that for all $\varphi \in \mathfrak{C}$, 
$$T+\varphi \vdash \mathfrak{g}(\varphi).$$
\item There is a true cone $\mathfrak{C}$ such that for all $\varphi \in \mathfrak{C}$, 
$$T+ \varphi + \mathfrak{g}(\varphi) \vdash \mathsf{Con}_T(\varphi).$$
\end{enumerate} 
\end{theorem}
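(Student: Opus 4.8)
The plan is to argue by contradiction: suppose neither (1) nor (2) holds, and derive a contradiction by constructing, via the recursion theorem, a self-referential sentence on which $\mathfrak{g}$ is forced to misbehave. The failure of (1) means that for every true $\varphi$ there is some $\psi$ in the cone generated by $\varphi$ — so $T+\psi\vdash\varphi$ — with $T+\psi\nvdash\mathfrak{g}(\psi)$; and the failure of (2) means that for every true $\varphi$ there is some $\psi$ in its cone with $T+\psi+\mathfrak{g}(\psi)\nvdash\mathsf{Con}_T(\psi)$. The idea is to build a single true sentence $\theta$ that lies in the cone of a given true $\varphi$ yet simultaneously realizes a consistency-strength contradiction: roughly, $\theta$ should assert ``$\varphi$ holds, and the search for a witness to the failure of (1) below some bound has not yet succeeded,'' in such a way that if $\mathfrak{g}(\theta)$ is provable from $T+\theta$ we can push down toward (1), while if it is not, we can leverage the $\Pi_k$-boundedness of $\mathfrak{g}(\theta)$ together with monotonicity to reflect and contradict the failure of (2).

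The key steps, in order, are as follows. First I would set up the diagonal sentence: using G{\"o}del's fixed point lemma relative to $T$, define $\theta$ so that $\theta$ is $T$-provably equivalent to a conjunction of $\varphi$ with a clause describing a bounded proof-search — the standard Friedman–Goldfarb–Harrington / slow-consistency style construction — arranged so that $\theta$ is true (this uses soundness of $T$ and the hypothesis that $\mathfrak{g}$ is recursive, so the relevant search predicate is $\Delta_1$ or at worst of complexity comparable to $\mathfrak{g}$'s). Second, I would verify that $\theta$ lies in the cone generated by $\varphi$, i.e. $T+\theta\vdash\varphi$, so that proving a dichotomy for $\theta$ suffices. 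Third, I would split on whether $T+\theta\vdash\mathfrak{g}(\theta)$: in one case, monotonicity of $\mathfrak{g}$ and the definition of $\theta$ let me transfer provability of $\mathfrak{g}$ down the cone to contradict the failure of (1); in the other case, I would use that $\mathfrak{g}(\theta)$ is $\Pi_k$ — hence $T+\theta+\mathfrak{g}(\theta)$ is, up to a conservativity/partial-reflection argument over $T$, no stronger than $T+\theta$ augmented by a $\Pi_k$-reflection instance — to derive $T+\theta+\mathfrak{g}(\theta)\vdash\mathsf{Con}_T(\theta)$ after all, contradicting the failure of (2). Throughout, the bounded-search clause in $\theta$ is what makes these two horns collide on the same sentence rather than on two unrelated ones.

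The main obstacle I anticipate is the bookkeeping in the third step: controlling the arithmetical complexity so that adjoining the $\Pi_k$ sentence $\mathfrak{g}(\theta)$ genuinely yields $\mathsf{Con}_T(\theta)$. One cannot in general conclude $T+\theta+\pi\vdash\mathsf{Con}_T(\theta)$ from $\pi\in\Pi_k$; what is available is that a single true $\Pi_k$ sentence added to a sound theory of complexity below its own verification level can be made to encode a consistency statement, and this has to be arranged internally in the construction of $\theta$ — i.e. $\theta$ must be designed so that the particular $\Pi_k$ sentence $\mathfrak{g}(\theta)$, whatever it turns out to be, is forced by the diagonal clause to imply $\mathsf{Con}_T(\theta)$ over $T+\theta$ precisely when $\mathfrak{g}(\theta)$ is not already $T+\theta$-provable. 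Getting the fixed point to see its own image under $\mathfrak{g}$ at the right complexity level, uniformly in the (unknown) value $k$, is the delicate point; everything else is an application of monotonicity, soundness, and the deduction theorem.
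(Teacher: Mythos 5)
There is a genuine gap, and it sits exactly where you flag it yourself: the ``delicate point'' of arranging that the $\Pi_k$ sentence $\mathfrak{g}(\theta)$ implies $\mathsf{Con}_T(\theta)$ over $T+\theta$ is never actually carried out, and the principle you invoke for it (``a single true $\Pi_k$ sentence added to a sound theory \dots can be made to encode a consistency statement'') is not true in general and is not something a fixed-point clause inside $\theta$ can force about the \emph{unknown} output $\mathfrak{g}(\theta)$. Moreover, the contradiction structure does not match the statement being proved. The negations of (1) and (2) only say that \emph{every} true cone contains some exceptional sentence; a single diagonal $\theta$ on which $\mathfrak{g}$ behaves one way or the other contradicts nothing, because refuting ``not (1)'' or ``not (2)'' requires exhibiting \emph{uniform} behavior on an entire true cone. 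Note also that monotonicity runs the wrong way for your first horn: if $T\vdash\psi\rightarrow\theta$ then $T\vdash\mathfrak{g}(\psi)\rightarrow\mathfrak{g}(\theta)$, so knowing $T+\theta\vdash\mathfrak{g}(\theta)$ does not propagate to the stronger sentences $\psi$ in the cone above $\theta$. So the proposal, as written, has no route from its case split on $\theta$ to either disjunct of the theorem.

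The paper's proof is quite different and much more direct: no contradiction, no recursion theorem. One writes down the single arithmetized sentence
$A:\ \forall x\,\forall y\,\big(\big(\mathcal{G}(x,y)\wedge\mathsf{True}_{\Pi_k}(y)\big)\rightarrow\mathsf{Con}_T(x)\big)$,
where $\mathcal{G}$ is a $\Sigma_1$ definition of the graph of $\mathfrak{g}$ and $\mathsf{True}_{\Pi_k}$ is the partial truth predicate (this is where the $\Pi_k$ bound is used), and splits on whether $A$ is true in $\mathbb{N}$. If $A$ is true, the cone generated by $A$ witnesses (2): for $\varphi$ in that cone, $T+\varphi$ proves $A$, proves $\mathcal{G}(\varphi,\mathfrak{g}(\varphi))$ by $\Sigma_1$-completeness, and hence proves $\mathsf{True}_{\Pi_k}(\mathfrak{g}(\varphi))\rightarrow\mathsf{Con}_T(\varphi)$, so adjoining $\mathfrak{g}(\varphi)$ yields $\mathsf{Con}_T(\varphi)$. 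If $A$ is false, then some inconsistent $\varphi$ has $\mathfrak{g}(\varphi)$ true, i.e.\ $\mathfrak{g}(\bot)$ is true; since $T\vdash\bot\rightarrow\varphi$ for every $\varphi$, monotonicity gives $T\vdash\mathfrak{g}(\bot)\rightarrow\mathfrak{g}(\varphi)$, so the cone generated by the true sentence $\mathfrak{g}(\bot)$ witnesses (1). These two moves --- the uniform truth-predicate sentence $A$ for the second disjunct, and the observation that $\mathfrak{g}(\bot)$ is a universal lower bound for $\mathfrak{g}$ for the first --- are the ideas your sketch is missing, and they are what turn the statement about cones from an obstacle into the mechanism of the proof.
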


\begin{proof}
Since $\mathfrak{g}$ is recursive, its graph is defined by a $\Sigma_1$ formula $\mathcal{G}$, i.e., for any $\varphi$ and $\psi$,
$$\mathfrak{g}(\varphi)=\psi \iff \mathbb{N}\vDash \mathcal{G}(\ulcorner\varphi\urcorner,\ulcorner\psi\urcorner).$$
Since $T$ is sound and $\Sigma_1$ complete, this implies that for any $\varphi$ and $\psi$,
\begin{equation} 
\label{eq:star} 
\tag{$\star$} 
\mathfrak{g}(\varphi)=\psi \iff T\vdash \mathcal{G}(\ulcorner\varphi\urcorner,\ulcorner\psi\urcorner)
\end{equation} 
From now on we drop the corner quotes and write $\mathcal{G}(\varphi,\psi)$ instead of $\mathcal{G}(\ulcorner\varphi\urcorner,\ulcorner\psi\urcorner)$, trusting that no confusion will arise.



We consider the following sentence in the language of arithmetic:
\begin{equation} 
\label{eq:A} 
\tag{$A$} 
\forall x \forall y \Big( \big( \mathcal{G}(x,y) \wedge \mathsf{True}_{\Pi_k} (y) \big) \rightarrow \mathsf{Con}_T(x) \Big).
\end{equation} 

Informally, $A$ says that, for every $\varphi$, the truth of $\mathfrak{g}(\varphi)$ implies the consistency of $T+ \varphi$. Note that we need to use a partial truth predicate in the statement $A$ since we are formalizing a uniform claim about the outputs of the function $\mathfrak{g}$. For any specific output $\psi$ of the function $\mathfrak{g}$, $T$ will be able to reason about $\psi$ without relying on the partial truth predicate.


We break into cases based on whether $A$ is true or false.

\textbf{Case 1:} $A$ is true in the standard model $\mathbb{N}$. We claim that in this case $$\mathfrak{C}:=\{ \varphi : T + \varphi \vdash A \}$$ satisfies condition (2) from the statement of the theorem. Clearly $\mathfrak{C}$ is a true cone. It suffices to show that for any $\varphi \in \mathfrak{C}$, $$T+ \varphi + \mathfrak{g}(\varphi) \vdash \mathsf{Con}_T(\varphi).$$
 So let $\varphi\in\mathfrak{C}$ and let $\psi=\mathfrak{g}(\varphi)$. We reason as follows.
\begin{flalign*}
T + \varphi & \vdash \forall x \forall y \Big( \big( \mathcal{G}(x,y) \wedge \mathsf{True}_{\Pi_k} (y) \big) \rightarrow \mathsf{Con}_T(x) \Big) \textrm{ by choice of $\mathfrak{C}$.}\\
T + \varphi & \vdash \big( \mathcal{G}(\varphi ,  \psi ) \wedge \mathsf{True}_{\Pi_k}(\psi)\big) \rightarrow \mathsf{Con}_T(\varphi)  \textrm{ by instantiation.}\\
T + \varphi & \vdash \mathcal{G}( \varphi ,  \psi) \textrm{ by observation $(\star)$.}\\
T + \varphi & \vdash  \mathsf{True}_{\Pi_k}(\psi) \rightarrow \mathsf{Con}_T(\varphi)  \textrm{ from the previous two lines by logic.}\\
T + \varphi + \psi &\vdash \mathsf{Con}_T(\varphi) \textrm{ trivially from the previous line.}\\
T+ \varphi + \mathfrak{g}(\varphi) & \vdash \mathsf{Con}_T(\varphi) \textrm{ since $\psi=\mathfrak{g}(\varphi)$.}
\end{flalign*}

\textbf{Case 2:} $A$ is false in the standard model $\mathbb{N}$. We infer that 
$$ \exists \varphi \exists \psi \Big(  \mathcal{G}(\varphi,\psi) \wedge \mathsf{True}_{\Pi_k} (\psi)  \wedge \neg \mathsf{Con}_T(\varphi) \Big).$$
Thus, there is an inconsistent sentence $\varphi$ such that $\mathfrak{g}(\varphi)$ is a true $\Pi_k$ sentence. This is to say that $\mathfrak{g}(\bot)$ is true. We claim that in this case $$\mathfrak{C}:=\{ \varphi : T + \varphi \vdash \mathfrak{g}(\bot) \}$$ satisfies condition (1) from the statement of the theorem. Clearly $\mathfrak{C}$ is a true cone. It suffices to show that for any $\varphi \in \mathfrak{C}$, $$T+ \varphi \vdash \mathfrak{g}(\varphi).$$ So let $\varphi \in \mathfrak{C}$. By the definition of $\mathfrak{C}$, we infer that
\begin{equation} 
\label{eq:dagger} 
\tag{$\dagger$} 
T+\varphi \vdash \mathfrak{g}(\bot).
\end{equation} 

We reason as follows.
\begin{flalign*}
T&\vdash \bot \rightarrow \varphi \textrm{ by logic.} \\
T&\vdash \mathfrak{g}(\bot) \rightarrow  \mathfrak{g}(\varphi) \textrm{ by the monotonicity of $\mathfrak{g}$.}\\
T+\varphi&\vdash \mathfrak{g}(\varphi) \textrm{ from the previous line and $\dagger$, by logic.}
\end{flalign*}
This completes the proof.
\end{proof}

\begin{remark}
Theorem \ref{main} is stated about operators $\mathfrak{g}$ that produce sentences of bounded arithmetical complexity, i.e., for some $k\in\mathbb{N}$, for all $\varphi$, $\mathfrak{g}(\varphi)$ is $\Pi_k$. The reason for this restriction is to invoke the partial truth-predicate for $\Pi_k$ sentences when providing the sentence $A$.
\end{remark}


\section{Recursiveness is a necessary condition}

In the proof of Theorem \ref{main} we appealed to the recursiveness of $\mathfrak{g}$ to show that $T$ correctly calculates the values of $\mathfrak{g}$, i.e., that for every $\varphi$ and $\psi$,
$$\mathfrak{g}(\varphi)=\psi \iff T\vdash \mathcal{G}(\ulcorner\varphi\urcorner,\ulcorner\psi\urcorner).$$ In this section we show that recursiveness is a necessary condition for the proof of Theorem \ref{main}. In particular, we exhibit a monotone operator $\mathfrak{g}$ which is recursive in $0'$ and produces only $\Pi_1$ sentences such that for arbitrarily strong true sentences $\varphi$, $$[\varphi \wedge \mathfrak{g}(\varphi)]=[\varphi\wedge\mathsf{Con}_T(\varphi)]$$
and for arbitrarily strong true sentences $\varphi$,
$$[\varphi \wedge \mathfrak{g}(\varphi)]=[\varphi].$$
Our proof makes use of a recursive set $\mathfrak{A}$ that contains arbitrarily strong true sentences and omits arbitrarily strong true sentences. A very similar set is constructed in \cite{montalban2019inevitability}. We now present the construction of the set $\mathfrak{A}$, which is necessary to understand the proof of the theorem. After describing the construction of $\mathfrak{A}$ we will verify some of its basic properties.

\begin{quote}

Let $\{\varphi_0,\varphi_1,...\}$ be an effective G{\"o}del numbering of the language of arithmetic. We describe the construction of $\mathfrak{A}$ in stages. During a stage $n$ we may \emph{activate} a sentence $\psi$, in which case we say that $\psi$ is \emph{active} until it is \emph{deactivated} at some later stage.

\textbf{Stage 0:} Numerate $\varphi_0$ and $\neg\varphi_0$ into $\mathfrak{A}$. Activate the sentences $(\varphi_0\wedge \mathsf{Con}_T(\varphi_0))$ and $(\neg\varphi_0\wedge \mathsf{Con}_T(\neg\varphi_0))$.

\textbf{Stage n+1:} There are finitely many active sentences. For each such sentence $\psi$, numerate $\theta_0:=(\psi\wedge\varphi_{n+1})$ and $\theta_1:=(\psi\wedge\neg\varphi_{n+1})$ into $\mathfrak{A}$. Deactivate the sentence $\psi$ and activate the sentences $(\theta_0\wedge \mathsf{Con}_T(\theta_0))$ and $(\theta_1\wedge \mathsf{Con}_T(\theta_1))$.
\end{quote}

\begin{remark}\label{branching}
It can be useful to visualize, along with the construction of $\mathfrak{A}$, the construction of an upwards growing tree that is (at most) binary branching. The nodes in the tree are the \emph{consistent} sentences that are numerated into $\mathfrak{A}$. The immediate successors in this tree of a sentence $\varphi$ have the form $(\varphi \wedge \mathsf{Con}_T(\varphi) \wedge \theta)$ and $(\varphi \wedge \mathsf{Con}_T(\varphi) \wedge \neg \theta)$. Thus, the successors of any two points are inconsistent with each other. Observe that for any two distinct sentences $\varphi$ and $\psi$ in the tree, $\varphi$ is below $\psi$ (i.e., $\varphi$ and $\psi$ belong to the same path and $\varphi$ is below $\psi$) if and only if $T+\psi \vdash \varphi$. It follows from the previous two observations that any two sentences that are incompatible with each other in the tree ordering are inconsistent with each other.
\end{remark}



\begin{lemma}\label{truth lemma}
At any stage in the construction of $\mathfrak{A}$, (i) exactly one of the active sentences is true in the standard model $\mathbb{N}$ and (ii) exactly one of the sentences numerated into $\mathfrak{A}$ is true in the standard model $\mathbb{N}$.
\end{lemma}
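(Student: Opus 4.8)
\textbf{Proof plan for Lemma \ref{truth lemma}.}
The plan is to prove both claims simultaneously by induction on the stage $n$, since the construction is set up so that the truth of the active sentences and the truth of the numerated sentences track each other. The crucial structural feature is the dichotomy used at each branching step: at stage $n+1$ we split an active sentence $\psi$ into $\theta_0 = \psi\wedge\varphi_{n+1}$ and $\theta_1 = \psi\wedge\neg\varphi_{n+1}$, and exactly one of $\varphi_{n+1}$, $\neg\varphi_{n+1}$ is true in $\mathbb{N}$. A second crucial feature is that each active sentence always has the form $\chi\wedge\mathsf{Con}_T(\chi)$ for the sentence $\chi$ most recently numerated on that branch, and, by G\"odel's second incompleteness theorem applied to a \emph{true} (hence consistent) $\chi$, the sentence $\mathsf{Con}_T(\chi)$ is itself true. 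So ``truth'' is preserved as we pass from a numerated sentence to the active sentence it spawns.

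The base case is Stage $0$: the numerated sentences are $\varphi_0$ and $\neg\varphi_0$, exactly one of which is true, giving (ii); and the active sentences are $\varphi_0\wedge\mathsf{Con}_T(\varphi_0)$ and $\neg\varphi_0\wedge\mathsf{Con}_T(\neg\varphi_0)$. If, say, $\varphi_0$ is the true one, then $\varphi_0$ is consistent, so by G\"odel's second incompleteness theorem $\mathsf{Con}_T(\varphi_0)$ is true, hence $\varphi_0\wedge\mathsf{Con}_T(\varphi_0)$ is true; and $\neg\varphi_0\wedge\mathsf{Con}_T(\neg\varphi_0)$ is false because its first conjunct is. This gives (i). For the inductive step, assume (i) and (ii) hold at stage $n$. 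By (i) there is a unique true active sentence $\psi$; write $\psi = \chi\wedge\mathsf{Con}_T(\chi)$, where $\chi$ was the sentence numerated on this branch at the previous stage. At stage $n+1$ every active sentence $\psi'$ is processed: for the true one $\psi$ we numerate $\theta_0 = \psi\wedge\varphi_{n+1}$ and $\theta_1 = \psi\wedge\neg\varphi_{n+1}$, exactly one of which is true (since $\psi$ is true and exactly one of $\varphi_{n+1},\neg\varphi_{n+1}$ is); for every other active $\psi'$, which is false by (i), both $\psi'\wedge\varphi_{n+1}$ and $\psi'\wedge\neg\varphi_{n+1}$ are false. Hence among all sentences numerated into $\mathfrak{A}$ at stage $n+1$, exactly one is true. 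Together with (ii) at stage $n$ — which says none of the \emph{previously} numerated sentences beyond this are true, so no spurious true sentence appears — we obtain (ii) at stage $n+1$. Similarly, the newly activated sentences are $\theta_0\wedge\mathsf{Con}_T(\theta_0)$, $\theta_1\wedge\mathsf{Con}_T(\theta_1)$ for each processed $\psi'$; the unique true one among the $\theta$'s, say $\theta_i$, is consistent, so $\mathsf{Con}_T(\theta_i)$ is true and $\theta_i\wedge\mathsf{Con}_T(\theta_i)$ is true, while all the other new active sentences have a false first conjunct. Since the old active sentences are all deactivated, exactly one active sentence is true at stage $n+1$, giving (i).

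The main obstacle — really the only non-bookkeeping point — is the application of G\"odel's second incompleteness theorem at each step: one must check that the sentence $\chi$ whose consistency we are asserting is genuinely consistent, and for this it is enough that $\chi$ is true in $\mathbb{N}$, which is exactly what the induction hypothesis supplies. Here one uses that $T$ is sound, so that any true sentence is consistent with $T$, and that $T$ extends $\mathsf{EA}$, so that G\"odel's second incompleteness theorem applies to $T+\chi$ and hence $\mathsf{Con}_T(\chi)$ (equivalently $\mathsf{Con}_{T+\chi}$, up to $T$-provable equivalence) is not refuted and is in fact true. The remaining verifications — that false sentences have false conjunctions with anything, and that exactly one of $\varphi_{n+1}$, $\neg\varphi_{n+1}$ is true — are immediate from the semantics of $\mathbb{N}$.
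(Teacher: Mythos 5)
Your proof is correct and takes essentially the same route as the paper: induction on the stage, using that exactly one of $\varphi_{n+1}$, $\neg\varphi_{n+1}$ is true, that a conjunction with a false active sentence is false, and that truth passes from a numerated sentence $\chi$ to the activated sentence $\chi\wedge\mathsf{Con}_T(\chi)$. Two small corrections to the write-up. First, the truth of $\mathsf{Con}_T(\chi)$ for true $\chi$ is not an application of G\"odel's second incompleteness theorem: it follows directly from the soundness of $T$ --- $\mathbb{N}$ models $T+\chi$, so $T+\chi$ is consistent, and $\mathsf{Con}_T(\chi)$ expresses exactly that --- which is the justification you in fact also give when you invoke soundness; the appeal to the second incompleteness theorem is a misattribution and plays no role here. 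Second, clause (ii) must be read per stage (exactly one of the sentences numerated \emph{at that stage} is true), since cumulatively $\mathfrak{A}$ acquires one true sentence at every stage; under that reading your parenthetical use of ``(ii) at stage $n$'' to exclude ``spurious'' true sentences is unnecessary, and under the cumulative reading the clause you attribute to it would be false. With these points cleaned up, your argument coincides with the paper's proof.
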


\begin{proof}
We proceed by induction on the stages in the construction of $\mathfrak{A}$.

\textbf{Stage 0:}  Exactly one of $\varphi_0$ or $\neg\varphi_0$ is true (these are the numerated sentences), and hence so is exactly one of $\varphi_0\wedge \mathsf{Con}(\varphi_0)$ and $\neg\varphi_0\wedge \mathsf{Con}(\neg\varphi_0)$ (these are the activated sentences). 

\textbf{Stage n+1:} At the end of stage $n$ there is exactly one true activated sentence $\theta$. Then so exactly one of $\zeta_0:=\theta\wedge\varphi_n$ and $\zeta_1:=\theta\wedge\neg\varphi_n$ is true (these are the numerated sentences). Hence exactly one of $\zeta_0\wedge \mathsf{Con}(\zeta_0)$ and $\zeta_1\wedge \mathsf{Con}(\zeta_1)$ is true (these are the activated sentences).
\end{proof}

\begin{corollary}\label{true branch}
There is a unique branch through the tree described in Remark \ref{branching} that contains only true sentences. We will call it the \emph{true branch.}
\end{corollary}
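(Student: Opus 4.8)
The plan is to read the true branch directly off of Lemma~\ref{truth lemma}. First I would fix the dictionary between the stage-by-stage construction of $\mathfrak{A}$ and the tree of Remark~\ref{branching}: by construction, the sentences numerated at stage $n\ge 1$ are exactly those of the form $\big(\psi\wedge\mathsf{Con}_T(\psi)\big)\wedge\varphi_n$ and $\big(\psi\wedge\mathsf{Con}_T(\psi)\big)\wedge\neg\varphi_n$ for $\psi$ numerated at stage $n-1$ (with $\varphi_0,\neg\varphi_0$ numerated at stage $0$), and the consistent ones among these are precisely the depth-$n$ nodes of the tree, each an immediate successor of a depth-$(n-1)$ node. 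Since every sentence true in $\mathbb{N}$ is consistent with $T$ (as $\mathbb{N}\models T$ by soundness), clause~(ii) of Lemma~\ref{truth lemma} supplies, for each $n$, a \emph{unique} sentence $\zeta_n$ that is numerated at stage $n$ and true, and each $\zeta_n$ is a node of the tree.

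Next I would verify that $(\zeta_n)_{n\in\mathbb{N}}$ is a path, i.e.\ that $\zeta_n$ is a successor of $\zeta_{n-1}$. By clause~(i) of Lemma~\ref{truth lemma} there is a unique true active sentence $\theta$ at the end of stage $n-1$; active sentences have the form $\xi\wedge\mathsf{Con}_T(\xi)$ for $\xi$ numerated at stage $n-1$, and $\theta$ being true forces $\xi$ true, so by uniqueness in clause~(ii) at stage $n-1$ we get $\xi=\zeta_{n-1}$ and hence $\theta=\zeta_{n-1}\wedge\mathsf{Con}_T(\zeta_{n-1})$. The stage-$n$ sentences numerated from $\theta$ are $\theta\wedge\varphi_n$ and $\theta\wedge\neg\varphi_n$; exactly one is true, and that one is $\zeta_n$, which is therefore an immediate successor of $\zeta_{n-1}$ in the tree.

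Then I would argue the path is maximal, hence a branch. Since $\zeta_{n-1}$ is true, $T+\zeta_{n-1}$ has the model $\mathbb{N}$, so $\mathsf{Con}_T(\zeta_{n-1})$ is true, hence $\zeta_{n-1}\wedge\mathsf{Con}_T(\zeta_{n-1})$ is a true — therefore consistent — active sentence, so one of its two stage-$n$ extensions is true, hence consistent, hence a node of the tree. Thus the path $(\zeta_n)_n$ never terminates and is a branch all of whose sentences are true. For uniqueness, let $B$ be any branch consisting only of true sentences; the same observation shows $B$ cannot be finite (a finite $B$ would have a true, consistent, hence available successor), so $B$ meets every depth $n$, and its depth-$n$ node is a consistent sentence numerated at stage $n$ and true, whence it equals $\zeta_n$ by clause~(ii) of Lemma~\ref{truth lemma}. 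Therefore $B=(\zeta_n)_n$, so the true branch is unique.

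The only real obstacle here is bookkeeping: one must carefully align the three indexings — sentences numerated at stage $n$, sentences active at the end of stage $n$, and depths in the tree of Remark~\ref{branching} — so that the uniqueness clauses of Lemma~\ref{truth lemma} are invoked at the correct place. Once that dictionary is in place, everything else is immediate.
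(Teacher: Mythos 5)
Your proof is correct and follows exactly the route the paper intends: the corollary is stated as an immediate consequence of Lemma~\ref{truth lemma}, with the true branch consisting of the unique true sentence numerated at each stage, and your argument simply makes the bookkeeping (stage/depth correspondence, downward closure and infinitude of the branch, uniqueness via clause~(ii)) explicit. No gaps; this is the same approach, spelled out in more detail than the paper bothers to give.
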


\begin{lemma}\label{arbitrary strong}
$\mathfrak{A}$ contains arbitrarily strong true sentences.
\end{lemma}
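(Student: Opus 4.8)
The plan is to show that the true branch through the tree in Remark~\ref{branching} witnesses that $\mathfrak{A}$ contains true sentences of arbitrarily high proof-theoretic strength. First I would invoke Corollary~\ref{true branch}: there is a unique branch $B = \langle \chi_0, \chi_1, \dots\rangle$ through the tree consisting entirely of true sentences, and every $\chi_i$ is numerated into $\mathfrak{A}$ (since the nodes of the tree are exactly the consistent sentences numerated into $\mathfrak{A}$, and true sentences are consistent). So it suffices to argue that the $\chi_i$ are cofinal in proof-theoretic strength among true sentences, i.e., that for every true sentence $\varphi$ there is some $\chi_i$ with $T + \chi_i \vdash \varphi$.

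The key observation is the structure of the tree: by construction, the immediate successors of a node $\psi$ have the form $(\psi \wedge \mathsf{Con}_T(\psi) \wedge \theta)$ and $(\psi \wedge \mathsf{Con}_T(\psi) \wedge \neg\theta)$, where at stage $n+1$ the splitting formula $\theta$ is $\varphi_{n+1}$, the $(n+1)$st sentence in the fixed Gödel enumeration. Hence along any branch, after stage $n$ the node $\chi_n$ provably (over $T$) decides each of $\varphi_0, \dots, \varphi_n$: for each $j \le n$, either $T + \chi_n \vdash \varphi_j$ or $T + \chi_n \vdash \neg\varphi_j$, according to which way the true branch split at stage $j$. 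Now given an arbitrary true sentence $\varphi$, it appears in the enumeration as $\varphi = \varphi_m$ for some $m$. Since the branch $B$ consists of true sentences, $\chi_m$ is true, so the true branch must have split at stage $m$ in the direction consistent with the truth of $\varphi_m$; that is, $T + \chi_m \vdash \varphi_m = \varphi$. Therefore $\chi_m \in \mathfrak{A}$ is a true sentence with $T + \chi_m \vdash \varphi$, which is exactly what it means for $\mathfrak{A}$ to contain arbitrarily strong true sentences.

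I would present this by first recording the decision property of branch nodes as the central claim (provable by a straightforward induction on stages, using Lemma~\ref{truth lemma} to locate the true successor at each stage), and then deriving the cofinality statement in one line as above. The main obstacle — really a bookkeeping point rather than a genuine difficulty — is being careful about the indexing: one must check that at stage $n+1$ the splitting is indeed on $\varphi_{n+1}$ (equivalently $\varphi_n$, depending on how one reads the construction text, but the discrepancy is immaterial) and that every initial segment $\varphi_0, \dots, \varphi_n$ gets decided by $\chi_n$, so that an arbitrary $\varphi_m$ is captured by $\chi_m$. One should also note explicitly that each $\chi_i$, being true, is consistent and hence genuinely a node of the tree and a member of $\mathfrak{A}$, and that "$T + \chi_m \vdash \varphi$" is the operative notion of "$\chi_m$ is at least as strong as $\varphi$," matching the cone terminology used elsewhere in the paper.
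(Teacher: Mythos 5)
Your proof is correct and takes essentially the same approach as the paper: both use Lemma~\ref{truth lemma} (equivalently Corollary~\ref{true branch}) to locate the unique true sentence produced at stage $m$ and observe that it has $\varphi_m$ as a conjunct, hence is a true member of $\mathfrak{A}$ implying the given true sentence. The paper phrases this more directly—taking the true active sentence $\theta$ going into stage $m$ and noting that $\theta\wedge\varphi_m$ is numerated into $\mathfrak{A}$—so your auxiliary induction on the ``decision property'' of branch nodes is harmless but not needed.
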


\begin{proof}
Let $\psi$ be a true sentence. $\psi$ appears at some point in our G{\"o}del numbering of the language of arithmetic, i.e., for some $n$, $\psi$ is $\varphi_n$. Going into stage $n$ of the construction of $\mathfrak{A}$, there is exactly one true active sentence $\theta$ by Lemma \ref{truth lemma}. Then $\theta \wedge \varphi_n$ is numerated into $\mathfrak{A}$. So $\mathfrak{A}$ contains a true sentence that implies $\psi$. 
\end{proof}

Our proof also makes use of iterated consistency statements. Let $\prec$ be an elementary presentation of $\omega$. For the sake of convenience, we reiterate the definition of the iterates of the consistency operator. We define these iterates by appealing to G{\"o}del's fixed point lemma:
$$T \vdash \mathsf{Con}^\alpha_T(\varphi) \leftrightarrow \forall \beta \prec \alpha \mathsf{Con}_T \big(\varphi\wedge\mathsf{Con}^\beta_T(\varphi) \big)$$
For true $\varphi$, the iterates of $\mathsf{Con}_T$ form a proper hierarchy of true sentences by G{\"o}del's second incompleteness theorem.

\begin{definition}
For a true sentence $\psi$ numerated into $\mathfrak{A}$ at stage $n$, let $\theta_\psi$ be a true sentence that is either the $(n+1)^{th}$ sentence in the G{\"o}del numbering of the language or the negation thereof (depending on which is true). The point of the definition is this: if $\psi$ is a true sentence numerated into $\mathfrak{A}$, then the next true sentence numerated into $\mathfrak{A}$ is $\psi \wedge \mathsf{Con}_T(\psi) \wedge \theta_\psi$.
\end{definition}


\begin{lemma}
\label{arbitrarily strong}
For arbitrarily strong true sentences $\psi \in \mathfrak{A}$, $T \nvdash (\psi \wedge \mathsf{Con}_T(\psi)) \rightarrow \theta_\psi.$
\end{lemma}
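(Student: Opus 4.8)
The plan is to show that for arbitrarily strong true $\psi \in \mathfrak{A}$, the sentence $\psi \wedge \mathsf{Con}_T(\psi)$ does not prove $\theta_\psi$ over $T$, which amounts to saying the true branch of the tree never ``collapses'': passing from $\psi$ to $\psi \wedge \mathsf{Con}_T(\psi)$ genuinely weakens the theory relative to the extra conjunct $\theta_\psi$ that is appended next. First I would recall that $\theta_\psi$ was chosen to be $\varphi_{n+1}$ or its negation, and crucially $\varphi_{n+1}$ ranges over \emph{all} sentences as $n$ ranges over $\mathbb{N}$; so for any target sentence $\chi$ that I want to realize as $\theta_\psi$, I can find a stage $n$ where $\varphi_{n+1}$ is (up to truth) that $\chi$. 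The point is to pick $\chi$ to be a true $\Pi_1$ sentence — say a Rosser-style sentence or simply an independent $\Pi_1$ sentence — that is \emph{not} provable from $T + \psi \wedge \mathsf{Con}_T(\psi)$ even though the latter is a consistent (indeed true) r.e.\ theory.

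The key steps, in order, would be: (1) Fix an arbitrary true sentence $\rho$; I want a true $\psi \in \mathfrak{A}$ with $T + \psi \vdash \rho$ and $T \nvdash (\psi \wedge \mathsf{Con}_T(\psi)) \to \theta_\psi$. (2) By Lemma~\ref{arbitrary strong} there is a true $\psi_0 \in \mathfrak{A}$ with $T + \psi_0 \vdash \rho$; let it be numerated at some stage $m$. (3) The theory $S := T + \psi_0 \wedge \mathsf{Con}_T(\psi_0)$ is a consistent, effectively axiomatized extension of $\mathsf{EA}$ (it is true in $\mathbb{N}$). By G\"odel's second incompleteness theorem applied to $S$, $\mathsf{Con}_S$ is a true $\Pi_1$ sentence with $S \nvdash \mathsf{Con}_S$; equivalently $T \nvdash (\psi_0 \wedge \mathsf{Con}_T(\psi_0)) \to \mathsf{Con}_S$. (4) Now I need $\theta_{\psi_0}$ to ``be'' $\mathsf{Con}_S$ — but $\theta_{\psi_0}$ is determined by the G\"odel numbering, not by me. The fix is to instead take $\psi := \psi_0 \wedge \mathsf{Con}_T(\psi_0) \wedge \theta_{\psi_0}$, the next true node, and iterate: I keep walking up the true branch until the sentence $\varphi_{n+1}$ appearing at the relevant stage happens to coincide with (a suitable true $\Pi_1$ independent sentence over) the current node's theory. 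Since every sentence appears in the numbering, and the node's theory keeps changing but stays a true effectively axiomatized extension of $\mathsf{EA}$, I can in fact just choose in advance: let $n$ be a stage with $\varphi_{n+1} = \mathsf{Con}_{T + \psi_n \wedge \mathsf{Con}_T(\psi_n)}$ where $\psi_n$ is the true node going into stage $n$ — but this is circular since $\psi_n$ depends on the construction up to $n$. (5) To break the circularity I would instead argue non-constructively: suppose for contradiction that for all sufficiently strong true $\psi \in \mathfrak{A}$ we had $T \vdash (\psi \wedge \mathsf{Con}_T(\psi)) \to \theta_\psi$. Then for cofinally many true nodes $\psi$ on the true branch, $T + \psi \wedge \mathsf{Con}_T(\psi)$ already proves $\theta_\psi$, hence proves the next true node $\psi' = \psi \wedge \mathsf{Con}_T(\psi) \wedge \theta_\psi$, so $[\psi \wedge \mathsf{Con}_T(\psi)] = [\psi']$ on a tail of the branch. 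Chaining these equalities along the branch gives that $[\psi_0 \wedge \mathsf{Con}_T(\psi_0)]$ equals the equivalence class of arbitrarily strong true sentences — in particular a tail of the true branch is constant modulo $T$-provable equivalence. But the true branch contains, by Lemma~\ref{arbitrary strong} applied cofinally, true sentences of strictly increasing strength (each $\psi'$ strictly implies $\psi$ since $T + \psi \nvdash \mathsf{Con}_T(\psi)$ by G\"odel's second theorem, as $T + \psi$ is true hence consistent), contradiction.

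I would therefore structure the proof as: assume the negation, extract that $T + (\psi \wedge \mathsf{Con}_T(\psi)) \vdash \theta_\psi$ for all true $\psi \in \mathfrak{A}$ past some stage $N$; observe this forces $[\psi \wedge \mathsf{Con}_T(\psi)] = [\text{next true node}]$ for all such $\psi$; iterate along the true branch to conclude the true branch is eventually $T$-provably-equivalence-constant; then derive a contradiction from G\"odel's second incompleteness theorem, which guarantees that each true node $\psi$ on the branch satisfies $T + \psi \nvdash \mathsf{Con}_T(\psi)$ (since $T + \psi$ is sound, hence consistent), so $\psi \wedge \mathsf{Con}_T(\psi)$ strictly implies $\psi$, so the branch is strictly increasing. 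The arithmetized soundness/consistency bookkeeping is routine given that $T$ extends $\mathsf{EA}$ and all nodes in question are true.

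The main obstacle I anticipate is the apparent circularity flagged in step (4): one cannot directly engineer $\theta_\psi$ to be a consistency statement for the theory of the node $\psi$, because $\theta_\psi$ is pinned down by the fixed G\"odel numbering while $\psi$ is only revealed by running the construction. The contradiction argument of step (5) is what sidesteps this — rather than exhibiting a specific good $\psi$, I show that if \emph{no} sufficiently strong $\psi$ were good, the true branch would stabilize modulo $T$-equivalence, which is impossible. A secondary point requiring care is making precise that ``chaining the equalities along the branch'' is legitimate: each step is $[\psi] \supseteq$ the class of the next node in the sense that $T + \psi \wedge \mathsf{Con}_T(\psi)$ proves it, and since consequence is transitive this propagates, but one must be careful that ``arbitrarily strong'' in Lemma~\ref{arbitrary strong} refers to the true branch specifically, which it does by Corollary~\ref{true branch} and Lemma~\ref{truth lemma}.
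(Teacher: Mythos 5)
Your step (5) is the right general shape---assume the conclusion fails on a tail of the true branch and play this off against Lemma \ref{arbitrary strong}---but the contradiction you derive does not actually follow. From the assumption you correctly get, for each true node $\psi$ past the threshold, that $[\psi'] = [\psi \wedge \mathsf{Con}_T(\psi)]$ where $\psi' = \psi \wedge \mathsf{Con}_T(\psi) \wedge \theta_\psi$ is the next true node. But these equalities do \emph{not} chain to ``a tail of the true branch is constant modulo $T$-provable equivalence'': the next equality is $[\psi''] = [\psi' \wedge \mathsf{Con}_T(\psi')]$, and $[\psi' \wedge \mathsf{Con}_T(\psi')] \neq [\psi']$ by G\"odel's second incompleteness theorem (which you yourself invoke later). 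You are conflating $[\psi'] = [\psi \wedge \mathsf{Con}_T(\psi)]$ with $[\psi'] = [\psi]$. Under the assumption for contradiction, the true branch above $\psi_0$ is, up to $T$-provable equivalence, exactly the sequence $\psi_0 \wedge \mathsf{Con}^n_T(\psi_0)$ for $n = 1, 2, \dots$, which is strictly increasing, not eventually constant; so the ``constant branch versus strictly increasing branch'' contradiction you aim for evaporates, and the argument as written does not close.

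The missing ingredient is exactly what the paper's proof uses: a single true sentence bounding that whole sequence. Having identified each true node above $\psi_0$ with $\psi_0 \wedge \mathsf{Con}^n_T(\psi_0)$, the paper observes that no true sentence in $\mathfrak{A}$ then implies $\psi_0 \wedge \mathsf{Con}^\omega_T(\psi_0)$ (the $\omega$-th iterate defined via the fixed point lemma, using that the iterated consistency statements form a proper hierarchy over true bases), which directly contradicts Lemma \ref{arbitrary strong}. So the contradiction is obtained from \emph{boundedness} of the branch by the transfinite iterate, not from its stabilization. Your paragraphs (1)--(4), attempting to engineer $\theta_\psi$ as a consistency statement, are indeed circular for the reason you flag and are rightly abandoned; to repair the proof, replace the chaining step in (5) by the explicit identification of the tail with the finite iterates $\psi_0 \wedge \mathsf{Con}^n_T(\psi_0)$ and then invoke $\mathsf{Con}^\omega_T(\psi_0)$ as the bounding true sentence that $\mathfrak{A}$ would then fail to reach.
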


\begin{proof}
Suppose not, i.e., suppose that there is a true $\varphi$ such that for all true $\psi$, if both $T\vdash \psi \rightarrow \varphi$ and $\psi \in \mathfrak{A}$, then:
\begin{equation}
\label{eq:oplus} 
\tag{$\oplus$} 
T\vdash (\psi \wedge \mathsf{Con}_T(\psi)) \rightarrow \theta_\psi.
\end{equation}
By Lemma \ref{arbitrary strong}, $\mathfrak{A}$ contains arbitrarily strong true sentences, so we know there is at least one such sentence $\psi_0$ in $\mathfrak{A}$ that implies $\varphi$. By the construction of $\mathfrak{A}$, the true sentences numerated into $\mathfrak{A}$ after $\psi_0$ are:
\begin{itemize}
\item $\psi_1:= \psi_0 \wedge \mathsf{Con}_T(\psi_0) \wedge \theta_{\psi_0}$
\item $\psi_2:= \psi_1 \wedge \mathsf{Con}_T(\psi_1) \wedge \theta_{\psi_1}$
\end{itemize}
and so on. Each $\psi_n$ implies $\varphi$ and is in $\mathfrak{A}$. Thus, each $\psi_n$ satisfies condition ($\oplus$), i.e., for each $\psi_n$, $T\vdash (\psi_n \wedge \mathsf{Con}_T(\psi_n)) \rightarrow \theta_{\psi_n}.$ This means that for all $n\geq 1$ the final conjunct of $\psi_n$ is superfluous. It follows that for each $n$, $\psi_n$ is $T$ provably equivalent to $\psi_0 \wedge \mathsf{Con}^n_T(\psi_0).$ But then no sentence in $\mathfrak{A}$ is stronger than $\psi_0 \wedge \mathsf{Con}^\omega_T(\psi_0)$, contradicting the fact proved in Lemma \ref{arbitrary strong}, i.e., that $\mathfrak{A}$ contains arbitrarily strong true sentences. 
\end{proof}

We are now ready to state and prove the theorem. Note that the following is a restatement of Theorem \ref{main second}.

\begin{theorem}
Let $T$ be a sound, effectively axiomatized extension of $\mathsf{EA}$. There is a $0'$ recursive monotone function $\mathfrak{g}$ such that, for every $\varphi$, $\mathfrak{g}(\varphi)$ is $\Pi_1$, yet for arbitrarily strong true sentences $\varphi$
$$[\varphi \wedge \mathfrak{g}(\varphi)]=[\varphi\wedge\mathsf{Con}_T(\varphi)]$$
and for arbitrarily strong true sentences $\varphi$
$$[\varphi \wedge \mathfrak{g}(\varphi)]=[\varphi].$$
\end{theorem}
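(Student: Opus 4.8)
The plan is to build $\mathfrak{g}$ by a $0'$-recursive construction that ``shadows'' the set $\mathfrak{A}$: on sentences that are (equivalent modulo $T$ to sentences) numerated into $\mathfrak{A}$, the function $\mathfrak{g}$ will either mimic the consistency operator or the identity, and on all other sentences it will do whatever monotonicity forces. Concretely, for a sentence $\varphi$ I would first ask (using $0'$) whether there is a sentence $\psi$ numerated into $\mathfrak{A}$ with $T\vdash\varphi\leftrightarrow\psi$; this is a $\Sigma_1$ question relative to the recursive set $\mathfrak{A}$, hence decidable in $0'$. If there is no such $\psi$, I set $\mathfrak{g}(\varphi)$ to be the (provably $\Pi_1$) disjunction $\bigvee\{\mathsf{Con}_T(\chi) : \chi\in\mathfrak{A},\ T\vdash\chi\rightarrow\varphi\}$ over the finitely many relevant $\chi$ — or, more simply, I handle the non-$\mathfrak{A}$ sentences by a uniform rule that is forced by the values on $\mathfrak{A}$-sentences via the tree structure of Remark \ref{branching}. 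The real content is the definition on $\mathfrak{A}$-sentences.

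For a true sentence $\psi$ numerated into $\mathfrak{A}$ at stage $n$ with next true sentence $\psi\wedge\mathsf{Con}_T(\psi)\wedge\theta_\psi$, I use $0'$ to decide whether $T\vdash(\psi\wedge\mathsf{Con}_T(\psi))\rightarrow\theta_\psi$ (a $\Sigma_1$ question). If this fails — which by Lemma \ref{arbitrarily strong} happens for arbitrarily strong true $\psi$ — I set $\mathfrak{g}(\psi)=\mathsf{Con}_T(\psi)$, so that $[\psi\wedge\mathfrak{g}(\psi)]=[\psi\wedge\mathsf{Con}_T(\psi)]$; moreover, since the final conjunct $\theta_\psi$ is not superfluous, the next true $\mathfrak{A}$-sentence $\psi\wedge\mathsf{Con}_T(\psi)\wedge\theta_\psi$ is \emph{strictly} stronger than $\psi\wedge\mathsf{Con}_T(\psi)$, and iterating along the true branch produces arbitrarily strong true sentences $\varphi$ with $[\varphi\wedge\mathfrak{g}(\varphi)]=[\varphi\wedge\mathsf{Con}_T(\varphi)]$. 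On the other hand, at stages where the implication \emph{does} hold I set $\mathfrak{g}(\psi)=\top$ (so $[\psi\wedge\mathfrak{g}(\psi)]=[\psi]$); I need to show these also occur for arbitrarily strong true $\psi$ — but this is immediate, since if from some point on the implication always held, the argument of Lemma \ref{arbitrarily strong} shows $\mathfrak{A}$ would be bounded by $\psi_0\wedge\mathsf{Con}_T^\omega(\psi_0)$, contradicting Lemma \ref{arbitrary strong}. For the non-true (inconsistent) $\mathfrak{A}$-sentences and the non-$\mathfrak{A}$ sentences, $\mathfrak{g}$ must be defined so as to respect monotonicity; I would define $\mathfrak{g}(\varphi)$ on every $\varphi$ to be a suitable $\Pi_1$ sentence (a finite disjunction of $\mathsf{Con}_T$ statements attached to the $\mathfrak{A}$-nodes below $\varphi$, together with $\mathsf{Con}_T(\varphi)$ itself when $\varphi$ is inconsistent so that $\mathfrak{g}(\bot)$ can be taken true or false as needed), using the tree ordering to guarantee that $T\vdash\varphi\rightarrow\psi$ forces $T\vdash\mathfrak{g}(\varphi)\rightarrow\mathfrak{g}(\psi)$.

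The main obstacle is \textbf{monotonicity}: $\mathfrak{g}$ must be a genuine monotone operator on the Lindenbaum algebra, yet I am defining it piecewise with values oscillating between $\top$ and $\mathsf{Con}_T(\cdot)$ along the true branch, and arbitrarily on sentences off the branch. The key structural fact that makes this possible is the observation in Remark \ref{branching}: distinct nodes of the tree that are not tree-comparable are $T$-inconsistent with one another, and $T\vdash\psi\rightarrow\varphi$ holds between tree nodes exactly when $\varphi$ is a tree-predecessor of $\psi$. So $\mathfrak{g}$ only needs to be monotone ``along paths,'' and along the true branch one checks directly that passing from $\psi$ to its successor $\psi\wedge\mathsf{Con}_T(\psi)\wedge\theta_\psi$ the implication $T\vdash\mathfrak{g}(\psi)\rightarrow\mathfrak{g}(\text{successor})$ is automatic because $\mathfrak{g}(\psi)\in\{\top,\mathsf{Con}_T(\psi)\}$ and the successor, being stronger, has $T\vdash\mathsf{Con}_T(\text{successor})\rightarrow\mathsf{Con}_T(\psi)$ — wait, that is the wrong direction, so in fact I must take the $\mathfrak{g}$-values to be \emph{disjunctions accumulating down the tree}: $\mathfrak{g}(\varphi):=\bigvee_{\chi}\,c(\chi)$ where $\chi$ ranges over tree nodes with $T\vdash\varphi\rightarrow\chi$ and $c(\chi)\in\{\bot,\mathsf{Con}_T(\chi)\}$ is the ``local choice'' at $\chi$. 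Then monotonicity is transparent (a larger set of disjuncts gives a weaker sentence, and $T\vdash\varphi\rightarrow\psi$ means $\psi$'s index set of predecessors contains $\varphi$'s), each $\mathfrak{g}(\varphi)$ is a finite disjunction of $\Pi_1$ sentences hence $\Pi_1$, and verifying the two required equivalences reduces to checking that at the relevant true nodes the accumulated disjunction collapses — modulo $\psi$ — to exactly $\mathsf{Con}_T(\psi)$ in the first case and to $\top$ in the second. Finally, $\mathfrak{g}$ is $0'$-recursive because computing the index set of tree-predecessors of $\varphi$ and each local choice $c(\chi)$ requires only the recursiveness of $\mathfrak{A}$ together with finitely many $\Sigma_1$ queries.
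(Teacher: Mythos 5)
There is a genuine gap, in fact two. First, monotonicity: your per-node assignment ($\mathfrak{g}(\psi)=\top$ at some nodes, $\mathsf{Con}_T(\psi)$ at others) is indeed broken, and your repair does not fix it. With $\mathfrak{g}(\varphi):=\bigvee\{c(\chi): \chi$ a tree node, $T\vdash\varphi\rightarrow\chi\}$, if $T\vdash\varphi\rightarrow\psi$ then the index set attached to $\varphi$ \emph{contains} the one attached to $\psi$ (a stronger sentence implies more tree-predecessors), not the reverse as you assert; so the disjunction yields $T\vdash\mathfrak{g}(\psi)\rightarrow\mathfrak{g}(\varphi)$, i.e.\ an antitone operator. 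The monotone direction requires a \emph{conjunction} over the implied $\mathfrak{A}$-sentences, which is exactly the paper's definition: $\mathfrak{g}(\varphi)=\bigwedge\{\mathsf{Con}_T(\zeta):\zeta\in\mathfrak{A},\ T+\varphi\vdash\zeta\}$ for consistent $\varphi$, and $\bot$ otherwise. Moreover, even ignoring the direction, your scheme cannot deliver the theorem: every tree node already proves $\mathsf{Con}_T(\chi)$ for each of its strict tree-predecessors $\chi$ (the successor of $\chi$ carries $\mathsf{Con}_T(\chi)$ as a conjunct), so as soon as some $c(\chi)=\mathsf{Con}_T(\chi)$ is chosen, every stronger node $\psi$ on that branch satisfies $T+\psi\vdash\mathfrak{g}(\psi)$; hence consistency-like behavior could occur at most once along the true branch, and the two requirements at arbitrarily strong nodes conflict.

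Second, your source of identity behavior is unsupported. You need nodes $\psi$ where $T\vdash(\psi\wedge\mathsf{Con}_T(\psi))\rightarrow\theta_\psi$ to occur arbitrarily high on the true branch, but Lemma \ref{arbitrarily strong} only guarantees that this implication \emph{fails} arbitrarily often; nothing guarantees it ever holds (this depends on the G\"odel numbering), and your argument for it is inverted — you derive a contradiction from the hypothesis that the implication eventually always holds, which is the negation of the wrong statement. The paper sidesteps both issues with the single uniform conjunction above: monotonicity is immediate; on $\mathfrak{A}$-sentences $\mathfrak{g}$ is provably equivalent to $\mathsf{Con}_T$, giving the first equivalence on arbitrarily strong true sentences; and the identity behavior is witnessed not at $\mathfrak{A}$-nodes at all but at the intermediate sentences $\varphi=\psi\wedge\mathsf{Con}_T(\psi)$ for $\psi\in\mathfrak{A}$ with $T\nvdash(\psi\wedge\mathsf{Con}_T(\psi))\rightarrow\theta_\psi$, where one shows (Claim \ref{big claim}) that every $\zeta\in\mathfrak{A}$ implied by $\varphi$ is already implied by $\psi$, so $T+\varphi\vdash\mathfrak{g}(\varphi)$ because $\varphi$ contains the conjunct $\mathsf{Con}_T(\psi)$ and $\mathsf{Con}_T$ is provably monotone. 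That change of witness — evaluating the fixed function between consecutive true nodes rather than choosing different behaviors at the nodes — is the missing idea.
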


\begin{proof}

We define the function $\mathfrak{g}$ as follows:
\begin{equation*}
   \mathfrak{g}(\varphi)=%
   \begin{cases}
   \bigwedge\{ \mathsf{Con}_T(\zeta) : \zeta\in\mathfrak{A} \textrm{ and } T+\varphi \vdash \zeta \} &\text{if $[\varphi]\neq[\bot]$}\\
     \bot &\text{otherwise}
   \end{cases}
\end{equation*}

We will check one-by-one that $\mathfrak{g}$ satisfies the properties ascribed to it in the statement of the theorem. We start by checking that $\mathfrak{g}$ is $0'$ recursive. In so doing, we will also demonstrate that $\mathfrak{g}$ is well defined, i.e., always produces a finitary sentence.

\begin{claim}
$\mathfrak{g}$ is $0'$ recursive.
\end{claim}

To verify that $\mathfrak{g}$ is $0'$ recursive, we informally describe an algorithm for calculating $\mathfrak{g}$ using $0'$ as an oracle. Here is the algorithm: Given an input $\varphi$, first use $0'$ to determine whether $[\varphi]=[\bot]$. If so, output $\bot$. Otherwise, we have to find all sentences $\psi \in \mathfrak{A}$ such that $T+\varphi \vdash \zeta$. Let's say that $\varphi$ is the $n^{th}$ sentence in our G{\"o}del numbering of the language of arithmetic. By the construction of $\mathfrak{A}$, the only sentences in $\mathfrak{A}$ that $T+\varphi$ proves must have been numerated into $\mathfrak{A}$ by stage $n$. So find each of the finitely many sentences that were activated by stage $n$ in the construction of $\mathfrak{A}$. For any such sentence $\zeta$, use $0'$ to determine whether $T+\varphi \vdash \zeta$.\footnote{Querying $0'$ is not strictly necessary here. If we already know that two sentences $\varphi$ and $\zeta$ are consistent, we can determine effectively whether $T+\varphi \vdash \zeta$ by paying attention to details of the construction of $\mathfrak{A}$.} Once all sentences $\psi \in \mathfrak{A}$ such that $T+\varphi \vdash \zeta$ have been found, output the conjunction of their consistency statements.

It is now routine to verify that the following claim is true:

\begin{claim}
$\mathfrak{g}$ is monotone and always produces a $\Pi_1$ sentence. 
\end{claim}

We now work towards showing that $\mathfrak{g}$ behaves like the consistency operator for arbitrarily strong inputs.

\begin{claim}
For arbitrarily strong true sentences $\varphi$, $[\varphi \wedge \mathfrak{g}(\varphi)]=[\varphi\wedge\mathsf{Con}_T(\varphi)].$
\end{claim}

To see why the claim is true, note that whenever $\varphi\in\mathfrak{A}$, it follows that $[\mathfrak{g}(\varphi)]=[\mathsf{Con}_T(\varphi)]$, whence 
$$[\varphi \wedge \mathfrak{g}(\varphi)]=[\varphi\wedge\mathsf{Con}_T(\varphi)].$$
Since $\mathfrak{A}$ contains arbitrary strong true sentences, it follows immediately that for arbitrarily strong true sentences $\varphi$, $$[\varphi \wedge \mathfrak{g}(\varphi)]=[\varphi\wedge\mathsf{Con}_T(\varphi)].$$ 

Thus, to prove the theorem, it suffices to see that $\mathfrak{g}$ behaves like the identity operator on arbitrarily strong inputs.

\begin{claim}
\label{goal}
For arbitrarily strong true sentences $\varphi$, $[\varphi \wedge \mathfrak{g}(\varphi)]=[\varphi].$
\end{claim}

To this end, we will assume only that $\psi$ is a true sentence satisfying the following claim:
\begin{equation}
\label{eq:sharp} 
\tag{$\sharp$} 
T\nvdash (\psi \wedge \mathsf{Con}_T(\psi)) \rightarrow \theta_\psi.
\end{equation}
Recall that if $\psi$ was numerated into $\mathfrak{A}$ at stage $n$, then $\theta_\psi$ is a true sentence that is either the $(n+1)^{th}$ sentence in the G{\"o}del numbering of the language or the negation thereof (depending on which is true). By Lemma \ref{arbitrarily strong}, we know that for arbitrarily strong true sentences $\psi$, $\psi$ satisfies $(\sharp)$. We will then show that for any such $\psi$, where $\varphi$ is the sentence $(\psi \wedge \mathsf{Con}_T(\psi))$, the following identity holds: $$[\varphi \wedge \mathfrak{g}(\varphi)]=[\varphi],$$ thus certifying the truth of Claim \ref{goal}.

So let $\psi$ be a true sentence in $\mathfrak{A}$ satisfying ($\sharp$). By Lemma \ref{truth lemma}, there is a unique next true sentence numerated into $\mathfrak{A}$, and that sentence is $\big(\psi \wedge \mathsf{Con}_T(\psi)\wedge\theta_\psi \big)$. We assert the following claim:

\begin{claim}\label{big claim}
For all sentences $\zeta \in \mathfrak{A}$, if $T +\big(\psi \wedge \mathsf{Con}_T(\psi) \big) \vdash \zeta$ then also $T+\psi\vdash \zeta$.
\end{claim}

Let's see why Claim \ref{big claim} is true. Since $(\psi \wedge\mathsf{Con}_T(\psi))$ is true, any sentence $\zeta$ in $\mathfrak{A}$ that is implied by $\big(\psi \wedge \mathsf{Con}_T(\psi) \big)$ belongs to the true branch (see Corollary \ref{true branch}). By assumption $(\sharp)$, $(\psi \wedge \mathsf{Con}_T(\psi))$ has strength strictly intermediate between $\psi$ and $(\psi \wedge \mathsf{Con}_T(\psi) \wedge \theta_\psi)$. Accordingly, any sentence $\zeta$ in $\mathfrak{A}$ that is implied by $(\psi \wedge\mathsf{Con}_T(\psi))$ must have been numerated into $\mathfrak{A}$ before $(\psi \wedge \mathsf{Con}_T(\psi) \wedge \theta_\psi)$. Recall that $\psi$ is the sentence in the true branch numerated into $\mathfrak{A}$ immediately before $(\psi \wedge \mathsf{Con}_T(\psi) \wedge \theta_\psi)$. So $\zeta$ either is $\psi$ or was numerated into $\mathfrak{A}$ earlier than $\psi$. Either way, $\psi$ implies $\zeta$. This certifies the truth of Claim \ref{big claim}.

We now introduce the formula $\varphi := \big( \psi \wedge \mathsf{Con}_T(\psi) \big)$. We make the following claim:

\begin{claim}
\label{mini claim}
$[\varphi \wedge \mathfrak{g}(\varphi)] = [\varphi \wedge \bigwedge\{ \mathsf{Con}_T(\zeta) : \zeta\in\mathfrak{A} \textrm{ and } T + \psi \vdash \zeta \}]$.
\end{claim}

We argue for Claim \ref{mini claim} as follows:
\begin{flalign*}
[\varphi \wedge \mathfrak{g}(\varphi)] &= [\varphi \wedge \bigwedge\{ \mathsf{Con}_T(\zeta) : \zeta\in\mathfrak{A} \textrm{ and } T + \varphi \vdash \zeta \}] \textrm{ by definition of $\mathfrak{g}$}\\
&= [\varphi \wedge \bigwedge\{ \mathsf{Con}_T(\zeta) : \zeta\in\mathfrak{A} \textrm{ and } T + (\psi\wedge \mathsf{Con}_T(\psi)) \vdash \zeta \}] \textrm{ by choice of $\varphi$}\\
&= [\varphi \wedge \bigwedge\{ \mathsf{Con}_T(\zeta) : \zeta\in\mathfrak{A} \textrm{ and } T + \psi \vdash \zeta \}] \textrm{ by Claim \ref{big claim}.}
\end{flalign*}
With Claim \ref{mini claim} on board, we are now ready to prove that $[\varphi]=[\varphi \wedge \mathfrak{g}(\varphi)].$ We reason as follows:
\begin{flalign*}
 T + \varphi + \mathsf{Con}_T(\psi) &\vdash \bigwedge\{ \mathsf{Con}_T(\zeta) : \zeta\in\mathfrak{A} \textrm{ and } T + \psi \vdash \zeta\} \textrm{ by the monotonity of $\mathsf{Con}_T$}. \\
 T + \varphi + \mathsf{Con}_T(\psi) &\vdash  \mathfrak{g}(\varphi) \textrm{ by Claim \ref{mini claim}.}\\
 T + \varphi & \vdash  \mathfrak{g}(\varphi) \textrm{ since $T+\varphi \vdash  \mathsf{Con}_T(\psi)$, by the choice of $\varphi$.}
 \end{flalign*}
This trivially implies that: $$[\varphi]=[\varphi \wedge \mathfrak{g}(\varphi)].$$
This completes the proof of the theorem.
\end{proof}

\bibliographystyle{plain}
\bibliography{thebibliography}

\end{document}